\documentclass{amsart}

\usepackage{amssymb}
\usepackage[hidelinks]{hyperref}
\usepackage[textsize=footnotesize]{todonotes}
\usepackage{tikz}
\usetikzlibrary{cd}
\usepackage{float}
\usepackage{graphics}

\theoremstyle{plain}
\newtheorem{theorem}{Theorem}[section]

\newtheorem{corollary}[theorem]{Corollary}
\newtheorem{proposition}[theorem]{Proposition}

\theoremstyle{definition}
\newtheorem{definition}[theorem]{Definition}

\newtheorem{example}[theorem]{Example}

\theoremstyle{remark}
\newtheorem{remark}[theorem]{Remark}

\newcommand{\bQ}{\mathbb{Q}}
\newcommand{\Q}{\bQ}

\newcommand{\cB}{\mathcal{B}}

\newcommand{\cI}{\mathcal{I}}
\newcommand{\I}{\cI}
\newcommand{\cJ}{\mathcal{J}}
\newcommand{\J}{\cJ}

\newcommand{\cP}{\mathcal{P}}

\newcommand{\cK}{\mathcal{K}}
\newcommand{\K}{\cK}

\newcommand{\cW}{\mathcal{W}}

\newcommand{\cc}{\mathfrak{c}}

\newcommand{\fin}{\mathrm{Fin}}
\newcommand{\Fin}{\mathrm{Fin}}
\newcommand{\ED}{\mathcal{ED}} 
\newcommand{\nwd}{\mathrm{NWD}}  
  
\newcommand{\conv}{\mathrm{CONV}}

\newcommand{\BW}{\mathrm{BW}} 
\newcommand{\FinBW}{\mathrm{FinBW}} 
\newcommand{\hFinBW}{\mathrm{hFinBW}} 
\newcommand{\hBW}{\mathrm{hBW}}

 \usepackage{gradient-text}

\begin{document}


\title{A new order for ideal sequential compactness}


\author[Adam Kwela]{Adam Kwela}
\address[Adam Kwela]{Institute of Mathematics\\ Faculty of Mathematics\\ Physics and Informatics\\ University of Gda\'{n}sk\\ ul.~Wita  Stwosza 57\\ 80-308 Gda\'{n}sk\\ Poland}
\email{Adam.Kwela@ug.edu.pl}
\urladdr{https://mat.ug.edu.pl/~akwela}

\author[Dorota Lesner]{Dorota Lesner}
\address[Dorota Lesner]{Institute of Mathematics\\ Faculty of Mathematics\\ Physics and Informatics\\ University of Gda\'{n}sk\\ ul.~Wita  Stwosza 57\\ 80-308 Gda\'{n}sk\\ Poland}
\email{Dorota.Lesner@phdstud.ug.edu.pl}


\date{\today}


\subjclass[2010]{Primary: 
03E05. 
Secondary:
03E35, 
03E15. 
}


\keywords{ideal, filter, ideal convergence, filter convergence, sequentially compact space, Bolzano-Weierstrass property.
}


\begin{abstract}
Let $\mathcal{I}$ be an ideal on $\omega$ and $X$ be a topological space. A sequence $(x_n)_{n\in \omega}$ in $X$ is $\mathcal{I}$-convergent if there is $x\in X$ such that $\{n\in \omega:x_n\notin U\}\in\mathcal{I}$ for every open neighborhood $U$ of $x$. We examine the following variant of sequential compactness associated with $\I$: $X$ is $\mathrm{BW}(\mathcal{I})$ if for every sequence  $(x_n)_{n\in \omega}$ in $X$ there is $A\notin\mathcal{I}$ such that $(x_n)_{n\in A}$ is $\mathcal{I}$-convergent. 

We introduce a new preorder on ideals, denoted $\leq_{BW}$, such that $\mathcal{I}\leq_{BW}\mathcal{J}$ implies that every $\mathrm{BW}(\mathcal{J})$ space is $\mathrm{BW}(\mathcal{I})$. Our main result states that under CH the above implication can be reversed in the case of $\mathbf{F_\sigma}$ ideals $\I$ and $\J$. 

We compare $\leq_{BW}$ with the Kat\v{e}tov order and study the relation $\leq_{BW}$ among some well-known ideals (e.g. the van der Waerden ideal $\mathcal{W}$ consisting of all subsets of $\omega$ that do not contain arbitrary long finite arithmetic progressions). As a consequence, we answer two open questions posed by Filip\'{o}w and Tryba in [Top. App. {\textbf{178}} (2014), 438--452] concerning comparison of $\mathrm{BW}(\mathcal{W})$ with the class of sequentially compact spaces.
\end{abstract}


\maketitle


\section{Introduction}

For the necessary definitions and notations, see Section \ref{sec:preliminaries}.

Let $\I$ be an ideal on $\omega$ and $A\in\I^+$. A sequence $(x_n)_{n\in A}$ in a topological space $X$ is said to be $\I$-convergent, if there is $x\in X$ such that $\{n\in A:x_n\notin U\}\in\I$ for every open neighborhood $U$ of $x$. 

\begin{definition}
Let $\I$ be an ideal on $\omega$. A topological space $X$ has the $\BW(\I)$ property if for every sequence  $(x_n)_{n\in \omega}$ in $X$ there is $A\in\I^+$ such that $(x_n)_{n\in A}$ is $\I$-convergent. In this case we briefly write $X\in\BW(\I)$ (however, formally $\BW(\I)$ is a class of topological spaces, not a set). 
\end{definition}

Obviously, $\BW(\Fin)$ is the class of sequentially compact spaces ($\Fin$ denotes the ideal consisting of all finite subsets of $\omega$). For this reason, the classes $\BW(\I)$ can be viewed as generalizations of sequential compactness. The aim of this paper is to study inclusions between the classes $\BW(\I)$. 

The case of $[0,1]$ is of particular interest, since the famous Bolzano-Weierstrass theorem states that the space $[0,1]$ is sequentially compact, i.e., that $[0,1]\in\BW(\Fin)$. Deep studies of ideals satisfying $[0,1]\in\BW(\I)$ have been performed by Filip\'{o}w, Mro\.{z}ek, Rec\l{}aw and Szuca in \cite{MR2320288}, where it is shown, among others, that $[0,1]\in\BW(\I)$ for every $\mathbf{F_\sigma}$ ideal $\I$. However, there are $\mathbf{F_{\sigma\delta}}$ ideals $\I$ with $[0,1]\notin\BW(\I)$ (see \cite{MR1181163}).

The classes $\BW(\I)$ have been studied previously. Below we present a short overview of the known results. Bernstein showed in \cite[Theorem 3.3]{MR251697} that every compact space has the $\BW(\I)$ property for every maximal ideal $\I$. In particular, for maximal ideals $\I$ it is possible to find a space with the $\BW(\I)$ property that is not sequentially compact. If $\I$ is an $\mathbf{F_\sigma}$ ideal then the class $\BW(\I)$ includes all compact metric spaces, all compact linearly ordered topological spaces and every limit ordinal of uncountable cofinality considered with the order topology (\cite[Corollary 5.7]{MR2961261}; see also \cite[theorem 1.1]{FT}).

There are also other ways of generalizing sequential compactness: $k$-Ramsey spaces (for $k\in\omega\setminus\{0\}$) and Hindman spaces, which are topological counterparts of $k$-dimensional Ramsey's theorem for coloring graphs and Hindman's finite sums theorem. It is worth noting that mathematicians studying such spaces come from different areas of mathematics such as ergodic theory (see \cite{MR2757532,BergelsonZelada,MR603625,MR531271}), topology (see \cite{MR3461173,MR3461181,MR2904054}) or set theory (see \cite{KojmanShelah}). Recently, Kubi\'{s} and Szeptycki in \cite{MR4552506} proved under CH that for each $k>1$ there is a $k$-Ramsey space which is not $(k+1)$-Ramsey. Later Coral, Guzm\'{a}n and L\'{o}pez-Callejas in \cite{High-dimensional} performed similar constructions under various different set-theoretic assumptions.

Another way of generalizing sequential compactness uses ideals on $\omega$.

\begin{definition}
Let $\I$ be an ideal on $\omega$. A topological space $X$ has the $\FinBW(\I)$ property if for every sequence $(x_n)_{n\in \omega}$ of elements of $X$ there is $A\in\I^+$ such that $(x_n)_{n\in A}$ is convergent (in the usual sense).
\end{definition}

In contrast to $\BW(\I)$, each $\FinBW(\I)$ is a subclass of sequentially compact spaces. Studies of such spaces have been initiated by Kojman in \cite{Kojman}, where he considered the class $\FinBW(\cW)$ given by the van der Waerden ideal $\cW$ consisting of all subsets of $\omega$ that do not contain arbitrarily long finite arithmetic progressions. Later Kojman and Shelah in \cite{KojmanShelah} performed under CH the first construction of a Mr\'{o}wka space with the $\FinBW(\cW)$ property, which is not Hindman. This construction inspired a long list of papers devoted to studies of the above mentioned generalizations of sequential compactness -- see for instance \cite{MR3097000, MR4356195, FT, MR2471564, MR2052425, MR4358658, MR4584767}. Among those papers we can also find the recent article \cite{FKK23}, presenting a unified approach to $\FinBW(\I)$ classes, Hindman spaces and $k$-Ramsey spaces. However, $\BW(\I)$ classes are not captured by this approach.

It is known that in general $\I\leq_K\J$ implies $\FinBW(\J)\subseteq\FinBW(\I)$, where $\leq_K$ denotes the Kat\v{e}tov order on ideals (see \cite[Corollary 10.2]{MR4584767}; for the definition of $\leq_K$ see Section \ref{sec:preliminaries}). Recently, the article \cite{MR4584767} established a deeper connection of the classes $\FinBW(\I)$ with the Kat\v{e}tov order. Namely, under CH the following holds for all $\mathbf{G_{\delta\sigma\delta}}$ ideals $\I$ and $\J$:
$$\I\leq_K\J\ \iff\ \FinBW(\J)\subseteq\FinBW(\I).$$
The space constructed in the proof (in order to show the implication $\I\not\leq_K\J\ \implies\ \FinBW(\J)\not\subseteq\FinBW(\I)$) is based on ideas developed by Kojman and Shelah in \cite{KojmanShelah}. This result has been generalized in \cite{FKK23} with the use of a new order introduced specifically for this purpose. However, studies of $\BW(\I)$ classes require a different approach, since every Mr\'{o}wka space has the $\BW(\I)$ property, for every ideal $\I$ (see \cite[Theorem 2.1]{FT} or \cite[Theorem 2.2]{MR3097000}).

Inspired by the above result, in this paper we introduce a new order on ideals, denoted $\leq_{BW}$, show that in general $\mathcal{I}\leq_{BW}\mathcal{J}$ implies $\mathrm{BW}(\mathcal{J})\subseteq\mathrm{BW}(\mathcal{I})$ (see Theorem \ref{thm:zawieranieBW}), while in the realm of $\mathbf{F_\sigma}$ ideals $\I$ and $\J$ the following holds under CH:
$$\I\leq_{BW}\J\ \iff\ \BW(\J)\subseteq\BW(\I)$$
(see Corollary \ref{cor:spaces_for_Fsigma}). It is worth mentioning that the space constructed in our proof (showing that $\I\not\leq_{BW}\J\ \implies\ \BW(\J)\not\subseteq\BW(\I)$) is completely different from the ones based on Kojman's and Shelah's proof from \cite{KojmanShelah}. We believe that for this reason our methods can significantly contribute to the further development of the discussed research topics.

Additionally, we examine basic properties of $\leq_{BW}$ (cf. Section \ref{sec:basic}), compare it with the Kat\v{e}tov order (Proposition \ref{prop:basic-properties}(d) and Remark \ref{rem:BWvsK}) and study the relation $\leq_{BW}$ among some well-known ideals (Section \ref{sec:porownania}). In particular, we answer two open questions posed by Filip\'{o}w and Tryba in \cite[Questions 2 and 3]{FT} by proving that $\BW(\cW)$ is a subclass of sequentially compact spaces, while under CH there is a sequentially compact space without the $\BW(\cW)$ property (see  Corollaries \ref{cor:idealy_powyzej_Fin}(a) and \ref{cor: spaces}(a)). We end with some open problems and directions for further research (Section \ref{sec:last}).


\section{Preliminaries}
\label{sec:preliminaries}

\subsection{Basics about ideals}

A family $\I$ of subsets of a set $M$ is called an ideal on $M$ if it is closed under taking subsets and finite unions of its elements. Additionally, we assume that ideals are proper ($\neq \mathcal{P}(M)$) and that all finite subsets of $M$ belong to $\I$. Note that $M=\bigcup\I$. In this paper we only consider ideals on countable sets. 

For an ideal $\I$ we denote by $\I^+=\{A\subseteq \bigcup\I: A\notin\I\}$ the family of all $\I$-positive sets. If $A\in\I^+$, then the restriction of $\I$ to $A$ is given by $\I|A=\I\cap\mathcal{P}(A)$. It is an ideal on $A$.

An ideal on a set $M$ is called Borel ($\mathbf{F_\sigma}$, etc.) if it is a Borel ($\mathbf{F_\sigma}$, etc., respectively) subset of $\cP(M)$ with the topology induced from $2^M$ by identifying subsets of $M$ with their characteristic functions ($2^M=\{0,1\}^M$ is considered with the product topology, where each space $2=\{0,1\}$ carries the discrete topology). The simplest example of an ideal is $\Fin$, which is equal to the family of all finite subsets of $\omega$. This ideal is $\mathbf{F_\sigma}$. By a result of Sierpi\'{n}ski, there are no $\mathbf{G_\delta}$ ideals. 

Let $\I$ and $\J$ be ideals. We say that:
\begin{itemize}
    \item $\I$ and $\J$ are isomorphic (written $\I\cong\J$) if there is a bijection $f:\bigcup\J\to \bigcup\I$ such that: 
    $$f^{-1}[B]\in \J \iff B\in \I$$
    for all $B\subseteq\bigcup\I$;
    \item $\I$ is below $\J$ in the Kat\v{e}tov order (written $\I\leq_K\J$) if there is an arbitrary function $f:\bigcup\J\to \bigcup\I$ such that $f[A]\in\I^+$ for all $A\in\J^+$. Equivalently, $\I$ is Kat\v{e}tov below $\J$ if there is a function $f:\bigcup\J\to\bigcup\I$ such that $f^{-1}[A]\in\J$ for all $A\in\I$.  
\end{itemize}
An ideal $\I$ is homogeneous if $\I|A\cong\I$ for all $A\in\I^+$.

All notions concerning ideals considered in this paper are invariant under isomorphisms. Therefore one can think about all ideals on countable sets as ideals on $\omega$ by identifying the domain set of the ideal with $\omega$ via a fixed bijection.  

An ideal $\I$ is tall if for every infinite $A\subseteq \bigcup\I$ there is an infinite $B\subseteq A$ with $B\in\I$. It is not difficult to see that $\I$ is tall if and only if $\I\not\leq_K\Fin$. We say that an ideal $\I$ is nowhere tall if $\I|A$ is not tall, for every $A\in\I^+$.

An ideal $\I$ is maximal if there is no other ideal $\J$ on $\bigcup\I$ such that $\J\supsetneq\I$. Equivalently, $\I$ is maximal if for each $A\subseteq\bigcup\I$ either $A\in\I$ or $\bigcup\I\setminus A\in\I$. We say that an ideal $\I$ is nowhere maximal if $\I|A$ is not maximal, for every $A\in\I^+$. It is known that maximal ideals do not have the Baire property and are non-measurable (see \cite{Sierpinski}). In particular, every Borel ideal is nowhere maximal (since restrictions of Borel ideals remain Borel). 

\subsection{Examples of ideals}

In the sequel, we will prove results about the following ideals:
\begin{itemize}
    \item The summable ideal is given by 
    $$\I_{1/n} = \left\{A\subseteq\omega: \sum_{n\in A}\frac{1}{n+1}<\infty\right\}.$$
    More generally, if $f:\omega\to[0,\infty)$ is such that $\sum_{n\in\omega}f(n)=\infty$, then 
    $$\I_{f} = \left\{A\subseteq\omega: \sum_{n\in A}f(n)<\infty\right\}$$
    is called a summable ideal. All such ideals are $\mathbf{F_{\sigma}}$. 
    \item The ideal $\nwd$ consisting of all $A\subseteq \Q\cap [0,1]$ that are nowhere dense (in the topology inherited from $\mathbb{R}$) is $\mathbf{F_{\sigma\delta}}$.
    \item The van der Waerden's ideal is defined by 
    $$\mathcal{W}= \{A \subseteq \omega : \exists_{n \in \omega} \textrm{ } A \textrm{  does not contain a finite arithmetic progression of length n} \}.$$
    It is an ideal by the van der Waerden's Theorem (see \cite{vanderWearden}). $\cW$ is $\mathbf{F_{\sigma}}$.
\end{itemize}
Ideals $\I_{1/n}$, $\nwd$ and $\cW$ are tall.

For $A\subseteq M\times N$ and $m\in M$, write $A_{(m)}=\{n\in N: (m,n)\in A\}$, i.e., $A_{(m)}$ is 
the vertical section of $A$ at the point $m$. For two ideals $\I$ and $\J$ on $M$ and $N$, respectively, define: 
\begin{itemize}
    \item $\I \oplus \J=\{(A \times \{0\}) \cup (B \times \{1\}): A \in \I\text{ and } B \in \J \}$. This is a proper ideal on $(M\times\{0\})\cup(N\times\{1\})$ even if only one of $\I$ and $\J$ is proper. Hence, we allow $\I\oplus\mathcal{P}(N)$ and $\mathcal{P}(M)\oplus\J$.
    \item $\I\otimes\J=\left\{A\subseteq M\times N: \{m\in M:A_{(m)}\notin\J\}\in\I\right\}$. This is a proper ideal even if one of $\I$ and $\J$ is equal to $\{\emptyset\}$. Hence, we allow $\I\otimes\{\emptyset\}$ and $\{\emptyset\}\otimes\J$.
\end{itemize}
A particular case of the latter is $\Fin^2 = \Fin\otimes\Fin$, which is an $\mathbf{F_{\sigma\delta\sigma}}$ ideal on $\omega^2$.

A map $\mu:\mathcal{P}(\omega)\to[0,\infty]$ is a submeasure on $\omega$ provided that $\mu(\emptyset)=0$, $\mu(\{n\})<\infty$ for all $n\in\omega$ and:
$$\forall_{A,B\subseteq\omega}\ \mu(A)\leq\mu(A\cup B)\leq \mu(A)+\mu(B)\text{.}$$
It is lower semicontinuous if additionally:
$$\forall_{A\subseteq\omega}\ \mu(A)=\lim_{n\in\omega}\mu(A\cap n)\text{.}$$
An ideal $\I$ on $\omega$ is called a generalized density ideal if there is a sequence $(\mu_n)_{n \in \omega}$ of lower semicontinuous submeasures on $\omega$ and a sequence of pairwise disjoint finite sets $(I_n)_{n \in \omega}$ such that: 
$$\I=\left\{A \subseteq \omega: \lim_{n \rightarrow \infty} \mu_n(A\cap I_n)=0\right\}\text{.}$$ 
All such ideals are $\mathbf{F_{\sigma\delta}}$. The best known member of this class is the ideal of sets of asymptotic density zero:
\[
\I_d=\left\{A\subseteq\omega:\lim_{n\to\infty}\frac{|A\cap n|}{n}=0\right\}\text{.}
\]
Generalized density ideals were introduced by Farah in \cite[Section 2.10]{MR1988247} (see also \cite{MR2254542}) and later used in different contexts (see for instance \cite{MR3436368, MR2320288, MR4404626, MR3968131}).

\subsection{P-like properties of ideals}

For any set $M$ and any two subsets $A,B\subseteq M$ write $A\subseteq^\star B$ if $A\setminus B$ is finite. We say that an ideal $\I$ is:
\begin{itemize}
\item $P^+$ if for every $\subseteq$-decreasing sequence $(A_n)_{n\in\omega}$ of elements of $\I^+$ there is $A\in\I^+$ such that $A\subseteq^\star A_n$ for all $n$,
\item $\widehat{P}^+$ if for every $\subseteq$-decreasing sequence $(A_n)_{n\in\omega}$ of elements of $\I^+$ there is $A\in\I^+$ such that $A\setminus A_n\in\I$ for all $n$.
\end{itemize}
Note that sometimes $\widehat{P}^+$ ideals are called $P^+(\I)$ ideals (see \cite{zbMATH07566290}). By \cite[Lemma 1.2]{MR748847}, every $\mathbf{F_\sigma}$ ideal is $P^+$. It is clear that $P^+$ implies $\widehat{P}^+$. 

Following \cite{MR2320288}, we say that an ideal $\I$ has the:
\begin{itemize}
    \item $\hFinBW$ property, if $[0,1]\in\FinBW(\I|A)$ for every $A\in\I^+$;
    \item $\hBW$ property, if $[0,1]\in\BW(\I|A)$ for every $A\in\I^+$.
\end{itemize}
Obviously, the $\hFinBW$ property implies the $\hBW$ property. If $\I$ is a $\widehat{P}^+$ ideal, then $\I$ has the $\hBW$ property (by \cite[Proposition 3.5]{MR2320288}; see also \cite[Proposition 2.13]{Ultrafilters} for an example of a non-$\widehat{P}^+$ ideal with the $\hBW$ property). It is also known that all $P^+$ ideals (so also all $\bf{F_\sigma}$ ideals) have the $\hFinBW$ property (by \cite[Proposition 2.8]{zbMATH06010584}, see also \cite[Proposition 3.4]{MR2320288}). We will need the following characterization (see \cite[Proposition 3.3]{MR2320288} and \cite[Proposition 2.9]{zbMATH06010584}).

\begin{theorem}\label{thm: hbw charakteryzacja}
An ideal $\I$ has the hBW property if and only if for every set $A\in \I^+$ and every family $\{A_s: s\in 2^{<\omega}\}$ of subsets of $A$ such that:
\begin{enumerate}
\item $A_{\emptyset}=A$;
\item $A_s=A_{s^{\frown} (0)} \cup A_{s^{\frown} (1)}$ for all $s\in 2^{<\omega}$;
\item $A_{s^{\frown}  (0)} \cap A_{s^{\frown} (1)}=\emptyset$ for all $s\in 2^{<\omega}$;
\end{enumerate}
there exist $x\in 2^{\omega}$ and $B\in \I^+$ such that $B\setminus A_{x\restriction n}\in \cI$ for each $n\in \omega$. 
\end{theorem}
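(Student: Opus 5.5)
Both implications will go through one fixed compact metric space, the Cantor set $2^\omega$, rather than $[0,1]$. The bridge is that $[0,1]\in\BW(\I|A)$ holds if and only if $2^\omega\in\BW(\I|A)$. One direction holds because $2^\omega$ embeds as a closed subset of $[0,1]$ (the ternary Cantor set). A limit of a subsequence lying in a closed set stays in it, so $\BW$ of $[0,1]$ passes to $2^\omega$. For the converse, take the continuous surjection $\varphi:2^\omega\to[0,1]$, $\varphi(x)=\sum_i x(i)2^{-i-1}$. Lift a sequence $(y_n)_{n\in A}$ to preimages $(x_n)$. If $(x_n)_{n\in B}$ is $\I$-convergent to $x$, then by continuity $(y_n)_{n\in B}$ is $\I$-convergent to $\varphi(x)$, since preimages of neighbourhoods of $\varphi(x)$ are neighbourhoods of $x$. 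I will work with sequences indexed by an arbitrary $A\in\I^+$, using $\I|A$.

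($\Leftarrow$) Assume the tree condition. Fix $A\in\I^+$ and a sequence $(x_n)_{n\in A}$ in $2^\omega$. Put $A_s=\{n\in A: s\subseteq x_n\}$. This family satisfies (1)–(3): each $x_n$ extends exactly one of $s^\frown(0)$ and $s^\frown(1)$ whenever it extends $s$. The hypothesis gives $x\in 2^\omega$ and $B\in\I^+$ with $B\subseteq A$ (intersecting with $A$ if necessary; $B\setminus A\subseteq B\setminus A_\emptyset\in\I$, so $B\cap A$ is still positive) such that $B\setminus A_{x\restriction k}\in\I$ for all $k$. The basic neighbourhoods of $x$ are $[x\restriction k]$, and $\{n\in B: x_n\notin[x\restriction k]\}=B\setminus A_{x\restriction k}\in\I$. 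Hence $(x_n)_{n\in B}$ is $\I$-convergent to $x$, which shows $2^\omega\in\BW(\I|A)$.

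($\Rightarrow$) Given $A$ and a family $\{A_s\}$ satisfying (1)–(3), define $x_n\in2^\omega$ for $n\in A$ as the unique branch with $n\in A_{x_n\restriction k}$ for all $k$. Existence and uniqueness follow by induction from (2) and (3): at each level $n$ lies in exactly one of the two children. Then $A_s=\{n\in A: s\subseteq x_n\}$. The $\hBW$ property gives $B\in(\I|A)^+$ with $(x_n)_{n\in B}$ $\I$-convergent to some $x$. Applying the definition to the neighbourhood $[x\restriction k]$ gives $B\setminus A_{x\restriction k}\in\I$, as required.

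No step is a real obstacle. The only care needed is the transfer from $[0,1]$ to $2^\omega$, specifically the direction through the continuous surjection, and keeping track that $\I|A$-positivity equals $\I$-positivity for subsets of $A$.
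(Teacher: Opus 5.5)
Your proof is correct, including the two places that need care: intersecting $B$ with $A$, and the identification $A_s=\{n\in A: s\subseteq x_n\}$, which follows from (2) and (3) as you indicate. The paper does not prove this statement itself; it quotes it from \cite[Proposition 3.3]{MR2320288} and \cite[Proposition 2.9]{zbMATH06010584}. So there is no in-text argument to compare with, and yours works as a complete, self-contained proof.

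Your route separates the combinatorics cleanly from the topology. Families $\{A_s\}$ of subsets of $A$ satisfying (1)--(3) correspond exactly to sequences $(x_n)_{n\in A}$ in $2^\omega$, via $A_s=\{n\in A: s\subseteq x_n\}$. Under this correspondence the conclusion of the tree condition says precisely that $(x_n)_{n\in B\cap A}$ is $\I$-convergent to $x$, with $B\cap A\in\I^+$. So the tree condition \emph{is} the statement ``$2^\omega\in\BW(\I|A)$ for all $A\in\I^+$'', and all the topology sits in your transfer lemma.

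Using the closed ternary embedding for the direction from $[0,1]$ to $2^\omega$ avoids a problem that binary expansions would cause: an $\I$-limit at a dyadic rational does not determine a branch. The same two transfer arguments show more. Every compact metric space is a continuous image of $2^\omega$, and every uncountable one contains a closed copy of $2^\omega$. Hence $[0,1]$ in the definition of $\hBW$ may be replaced by any uncountable compact metric space.

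Your route also makes clear how the result is used in Theorem~\ref{thm:space}. There $B_s=\{n\in\widehat{Z}_\alpha: s\subseteq y_n\}$ with $y_n=(h_{f_\alpha(n)}(x_{\beta_i}))_{i\in\omega}$, so what is really invoked is $2^\omega\in\BW(\I|\widehat{Z}_\alpha)$.

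One detail is worth writing out. The claim that ``a limit of a subsequence lying in a closed set stays in it'' holds for $\I$-limits only because the index set is $\I$-positive. Suppose the limit $y$ lay outside the Cantor set $K$. Then $[0,1]\setminus K$ is a neighbourhood of $y$ that every $x_n$ with $n\in B$ misses, which would force $B\in\I$. Over an $\I$-small index set, every point is an $\I$-limit.
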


\section{A new preorder for ideals}
\label{sec:basic}

In this section we introduce the relation $\leq_{BW}$, study its basic properties and compare it with the Kat\v{e}tov order.

\begin{definition}
Let $\I$ and $\J$ be ideals. We write $\I\leq_{BW}\J$ if there is $f:\bigcup\J\to\bigcup\I$ such that for every $A\in\J^+$ there is $B\subseteq f[A]$, $B\in\I^+$ such that given any $C$ with $C\cap B\in\I^+$ we have $A\cap f^{-1}[C]\in\J^+$.
\end{definition}

Next result summarizes some basic properties of $\leq_{BW}$. In Remark \ref{rem:BWvsK} we will see that the implication from Proposition \ref{prop:basic-properties}(d) cannot be reversed.

\begin{proposition}
\label{prop:basic-properties}
Let $\I$, $\J$ and $\mathcal{K}$ be ideals.
\begin{itemize}
    \item[(a)] The relation $\leq_{BW}$ is reflexive and transitive.
    \item[(b)] $\I\oplus\J\leq_{BW}\I$.
    \item[(c)] $\I\leq_{BW}\I \otimes \J$.
    \item[(d)] If $\I\leq_{BW}\J$, then $\I\leq_{K}\J$.
    \item[(e)] There is a $\leq_{BW}$-antichain of cardinality $\cc$ consisting of summable ideals.
    \item[(f)] The relation $\leq_{BW}$ is invariant over isomorphisms of ideals, i.e. if $\I'$ is isomorphic to $\I$ and $\J'$ is isomorphic to $\J$, then
    \[
    \I\leq_{BW}\J\ \iff\ \I'\leq_{BW}\J'.
    \]
    \item[(g)] If $\mathcal{K}\leq_{BW}\I$ and $\mathcal{K}\leq_{BW}\J$ then $\mathcal{K}\leq_{BW}\I \oplus \J$.
\end{itemize}
\end{proposition}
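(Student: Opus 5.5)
Parts (a)–(d), (f) and (g) can all be checked directly from the definition of $\leq_{BW}$ with a natural choice of witnessing function. Part (e) needs a real construction, and I expect it to be the main obstacle.

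\emph{(a)} For reflexivity, use the identity map and take $B=A$.

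For transitivity, suppose $f$ witnesses $\I\leq_{BW}\J$ and $g$ witnesses $\J\leq_{BW}\K$. I claim $f\circ g$ witnesses $\I\leq_{BW}\K$.
\begin{enumerate}
\item Given $A\in\K^+$, choose $B_\J\subseteq g[A]$, $B_\J\in\J^+$, as provided by $g$.
\item Then choose $B_\I\subseteq f[B_\J]\subseteq (f\circ g)[A]$, $B_\I\in\I^+$, as provided by $f$ applied to $B_\J$.
\item If $C\cap B_\I\in\I^+$, then $B_\J\cap f^{-1}[C]\in\J^+$. So the set $D=f^{-1}[C]$ satisfies $D\cap B_\J\in\J^+$.
\item Hence $A\cap g^{-1}[f^{-1}[C]]\in\K^+$, which is what we need.
\end{enumerate}

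\emph{(b)} Use $f(m)=(m,0)$. For $A\in\I^+$ put $B=A\times\{0\}$. A set $C$ meets $B$ in an $(\I\oplus\J)$-positive set exactly when $\{m\in A:(m,0)\in C\}\in\I^+$. That set is precisely $A\cap f^{-1}[C]$.

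\emph{(c)} Use the projection $f(m,n)=m$. For $A\in(\I\otimes\J)^+$ put $B=\{m: A_{(m)}\notin\J\}$. Then $B\in\I^+$ by the definition of $\I\otimes\J$, and $B\subseteq f[A]$. If $C\cap B\in\I^+$, then $A\cap f^{-1}[C]=A\cap (C\times N)$ has a $\J$-positive section at every $m\in C\cap B$. Since $C\cap B\in\I^+$, this set is $(\I\otimes\J)$-positive.

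\emph{(d)} The same $f$ works for the Katětov order. For $A\in\J^+$ we have $f[A]\supseteq B\in\I^+$, so $f[A]\in\I^+$.

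\emph{(f)} A bijection $\varphi$ witnessing $\I\cong\I'$ witnesses both $\I\leq_{BW}\I'$ and $\I'\leq_{BW}\I$, taking $B=\varphi[A]$. Then (f) follows from transitivity in (a).

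\emph{(g)} Let $f$ witness $\K\leq_{BW}\I$ and $g$ witness $\K\leq_{BW}\J$. Define $h$ on the disjoint union by $h(m,0)=f(m)$ and $h(n,1)=g(n)$.
\begin{enumerate}
\item If $A\in(\I\oplus\J)^+$, then one of its two pieces is positive; say $A_0=\{m:(m,0)\in A\}\in\I^+$.
\item Take $B\subseteq f[A_0]\subseteq h[A]$ given by $f$.
\item If $C\cap B\in\K^+$, then $A_0\cap f^{-1}[C]\in\I^+$.
\item Hence $A\cap h^{-1}[C]\supseteq (A_0\cap f^{-1}[C])\times\{0\}$ is $(\I\oplus\J)$-positive.
\end{enumerate}
The other case, where the $\J$-piece is positive, is symmetric.

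\emph{(e)} By (d) it suffices to produce a family of $\cc$ many summable ideals that is an antichain in the Katětov order.

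My plan is to partition $\omega$ into consecutive finite intervals $P_k$ of rapidly increasing length. For each $x$ in a family $\{x_\alpha:\alpha<\cc\}\subseteq\omega^\omega$ of pairwise \emph{eventually different and mutually non-dominating} growth rates, define a summable ideal $\I_{g_x}$. Here $g_x$ is constant on each block $P_k$ with value chosen so that $\sum_{n\in P_k}g_x(n)\approx 1$, while the number of points in the block governed by $x$ varies.

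To show $\I_{g_x}\not\leq_K\I_{g_y}$ for $x\neq y$, take any $f$ and argue by counting. Using that $x$ is infinitely often much larger than $y$, find infinitely many blocks where $y$'s weights are large compared with $x$'s. Then pick in those blocks sets of small $g_x$-measure whose $f$-preimages have large $g_y$-measure. Taking a union of these pieces gives a set in $\I_{g_x}$ whose preimage is $\I_{g_y}$-positive, so $f$ is not a Katětov witness.

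This measure-counting step is the main obstacle. If it becomes delicate, I would instead invoke a known result that there are $\cc$ many Katětov-incomparable summable ideals, and conclude via (d).
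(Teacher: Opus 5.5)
Your proposal is correct and matches the paper item by item. It uses the same witnessing maps in (a), (b), (c) and (g), and the evident arguments for (d) and (f), which the paper simply calls straightforward. For (e) the paper does exactly your fallback: it cites Guzm\'an--Meza (Theorem 1; see also Nikodem, Corollary 3.6) for $\cc$ many pairwise Kat\v{e}tov-incomparable summable ideals and then applies (d).

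That fallback is not optional, however, because your own sketch for (e) is not a proof. As written, the blocks $P_k$ are fixed and $g_x$ is constant on $P_k$ with block sum $\approx 1$, so all the $g_x$ agree up to a bounded factor and define the same ideal. Moreover, the counting step compares weights block by block, which tacitly assumes $f$ maps blocks to blocks. Handling an arbitrary $f$ is precisely the content of the cited theorem.
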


\begin{proof}
(a): Since reflexivity is obvious, we will only prove that $\leq_{\BW}$ is transitive. Suppose that $f$ witnesses $\I \leq_\BW\J$ and $g$ witnesses $\J \leq_\BW \K$. We will show that $\I \leq_\BW K$. Let us define function $h: \bigcup \K \rightarrow \bigcup
\I$ as $h(x)=f(g(x))$ for any $x \in \bigcup \K$. Fix any $A \in \K^+$. By assumption we know that there is $B^* \in \J^+$, $B^* \subseteq g[A]$ such that given any $C$ with $C\cap B^*\in\J^+$ we have $A\cap g^{-1}[C]\in\K^+$. Since $f$ witnesses $\I \leq_\BW\J$, there is $B \subseteq f[B^*]$, $ B \in \I^+$ such that given any $C$ with $C\cap B\in\I^+$ we have $B^*\cap f^{-1}[C]\in\J^+$. Then $B \subseteq f[g[A]]=h[A]$. Fix any $C$ with $C \cap B \in \I^+$. Then we have $B^* \cap f^{-1}[C]\in \J^+$ and that implies $A \cap h^{-1}[C] = A \cap g^{-1}[f^{-1}[C]] \in \K^+$.

\medskip

(b): Without loss of generality we may assume that both $\I$ and $\J$ are ideals on $\omega$.
Let $f:\omega \to \omega\times\{0,1\}$ be such that $f(i)=(i,0)$ for all $i \in \omega$. We will show that $f$ is a function that witnesses $\I \oplus \J\leq_\BW \I$. Fix any $A \in \I^+$. Then $f[A]=A\times\{0\}\in (\I\oplus\J)^+$. Let $B=f[A]$. Fix any $C$ with $C\cap B \in (\I\oplus\J)^+$. Then $\I^+\ni\{i\in\omega:(i,0)\in B\cap C\}=A\cap\{i\in \omega:(i,0)\in C\}=A\cap f^{-1}[C]$.
 
\medskip

(c): Without loss of generality we may assume that both $\I$ and $\J$ are ideals on $\omega$.
Let $f:\omega^2\to\omega$ be given by $f(i,j)=i$ for all $(i,j)\in\omega^2$. We will show that $f$ witnesses $\I\leq_{BW}\I\otimes\J$. Fix any $A\in(\I \otimes\J)^+$. Then $B=\{i\in\omega: A_{(i)}\in\J^+\}$ is $\I$-positive and contained in $f[A]$. Let $C\subseteq\omega$ be such that $C\cap B\in\I^+$. Then $A\cap f^{-1}[C]\supseteq \bigcup_{i\in C\cap B}(\{i\}\times A_{(i)})\in(\I \otimes\J)^+$ (as $A_{(i)}\in\J^+$ for all $i\in C\cap B$).

\medskip

(d): Straightforward.

\medskip

(e): By \cite[Theorem 1]{GuzmanMeza} (see also \cite[Corollary 3.6]{Nikodem}) there is a family $\{\I_\alpha:\alpha<\cc\}$ consisting of summable ideals such that $\I_\alpha\not\leq_K\I_\beta$ for all $\alpha,\beta<\cc$, $\alpha\neq\beta$. Thus, it suffices to apply item (d).

\medskip

(f): Straightforward.

(g): Suppose that $f:\bigcup\I\to\bigcup\mathcal{K}$ witnesses $\mathcal{K}\leq_{BW}\I$ and $g:\bigcup\J\to\bigcup\mathcal{K}$ witnesses $\mathcal{K}\leq_{BW}\J$. Define the function $h:(\bigcup\I\times\{0\})\cup(\bigcup\J\times\{1\})\to\bigcup\mathcal{K}$ by:
\[
h(n,i)=\begin{cases}
f(n), & \text{if }i=0,\cr
g(n), & \text{if }i=1,
\end{cases}
\]
for all $(n,i)\in (\bigcup\I\times\{0\})\cup(\bigcup\J\times\{1\})$. Let $A\in(\I\oplus\J)^+$. Then either $\{n\in\bigcup\I:(n,0)\in A\}\in\I^+$ or $\{n\in\bigcup\J:(n,1)\in A\}\in\J^+$. Without loss of generality we will assume that $A'=\{n\in\bigcup\I:(n,0)\in A\}\in\I^+$. Then there is $\mathcal{K}^+\ni B\subseteq f[A']\subseteq h[A]$ such that for every $C$ with $C\cap B\in\mathcal{K}^+$ we have $A'\cap f^{-1}[C]\in\I^+$, which gives us $A\cap h^{-1}[C]\supseteq (A'\cap f^{-1}[C])\times\{0\}\in(\I\oplus\J)^+$. Hence, $h$ is a witness for $\mathcal{K}\leq_{BW}\I \oplus \J$.
\end{proof}

Next result establishes a connection of $\leq_{BW}$ with the classes $\BW(\I)$, which are of our main interest. As we will see in Example \ref{ex:Fin^2}, the below implication cannot be reversed in general. However, we will be able to reverse it under additional assumptions (see Theorem \ref{thm:space}).

\begin{theorem}
\label{thm:zawieranieBW}
Let $\I$ and $\J$ be ideals. If $\I\leq_{BW}\J$, then $\BW(\J)\subseteq\BW(\I)$.
\end{theorem}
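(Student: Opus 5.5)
We argue directly from the definitions. Fix a witness $f:\bigcup\J\to\bigcup\I$ for $\I\leq_{BW}\J$ and a space $X\in\BW(\J)$; we may assume $\bigcup\I=\bigcup\J=\omega$ by Proposition \ref{prop:basic-properties}(f). Given a sequence $(x_n)_{n\in\omega}$ in $X$, we pull it back along $f$ to the sequence $(y_k)_{k\in\omega}$ with $y_k=x_{f(k)}$. Since $X\in\BW(\J)$, there are $A\in\J^+$ and $x\in X$ such that $(y_k)_{k\in A}$ $\J$-converges to $x$, i.e.\ $\{k\in A: x_{f(k)}\notin U\}\in\J$ for every open neighbourhood $U$ of $x$.

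Next we apply the definition of $\leq_{BW}$ to this $A$. It yields $B\subseteq f[A]$ with $B\in\I^+$ such that $A\cap f^{-1}[C]\in\J^+$ whenever $C\cap B\in\I^+$. We claim that $(x_n)_{n\in B}$ is $\I$-convergent to $x$, which gives $X\in\BW(\I)$. Fix an open neighbourhood $U$ of $x$ and put $C=\{n\in B: x_n\notin U\}$. Suppose, towards a contradiction, that $C\in\I^+$. Then $C\cap B=C\in\I^+$, so $A\cap f^{-1}[C]\in\J^+$. However, every $k\in A\cap f^{-1}[C]$ satisfies $x_{f(k)}\notin U$, so $A\cap f^{-1}[C]\subseteq\{k\in A: y_k\notin U\}\in\J$. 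This is a contradiction, hence $C\in\I$.

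There is no genuine obstacle here. The only point requiring care is choosing the pulled-back sequence $y_k=x_{f(k)}$, which is what lets the $\J$-positive set $A$ from $\BW(\J)$ be transported to the $\I$-positive set $B$. Once that is done, the clause ``$C\cap B\in\I^+\Rightarrow A\cap f^{-1}[C]\in\J^+$'' in the definition of $\leq_{BW}$ is used exactly once, in contrapositive form, with $C$ the set of indices in $B$ that escape $U$.
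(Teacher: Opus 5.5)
Your proposal is correct and follows the paper's proof essentially step for step. You pull the sequence back along $f$, take the $\J$-positive set $A$ from $\BW(\J)$, take $B$ from the definition of $\leq_{BW}$, and apply the defining clause contrapositively to $C=\{n\in B: x_n\notin U\}$, exactly as the paper does.
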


\begin{proof}
Without loss of generality we may assume that both $\I$ and $\J$ are ideals on $\omega$. Suppose that a space $X$ has the property $\BW(\J)$. We will show that $X$ has the $\BW(\I)$ property. Fix any sequence $(x_n)_{n \in \omega} \subseteq X$. By assumption of the theorem we know that there exists a function $f:\omega \rightarrow \omega$  witnessing $\I \leq_\BW \J$. Then $(x_{f(n)})_{n \in \omega}$ is also a sequence in $X$. Since $X \in \BW(\J)$, there is $A \in \J^+$ and $x \in X$ with $(x_{f(n)})_{n \in A} \overset{\J}\longrightarrow x$. Now, let $ f[A]\supseteq B \in \I^+$ be the set from the definition of $\I \leq_\BW \J$. To show that $(x_n)_{n \in B} \overset{\I}\longrightarrow x$ we need to fix an open neighborhood $U$ of $x$. Define $C=\{n \in B: x_n \notin U \}$.  Then 
$$A \cap f^{-1}[C]=A \cap \{n \in f^{-1}[B]: x_{f(n)} \notin U \} \subseteq \{n \in A: x_{f(n)} \notin U\} \in \J .$$ 
By the choice of $B$, we have $\{n \in B : x_n \notin U \}= C \cap B \in \I$. Hence, $(x_n)_{n \in B} \overset{\I}\longrightarrow x$. 
\end{proof}

\begin{proposition}
\label{prop:Fin_vs_tall}
Let $\I$ be an ideal.
\begin{itemize}
    \item[(a)] $\I\leq_{BW}\Fin$ if and only if $\I$ is not tall. In particular, if $\I$ is not tall, then $\BW(\I)$ contains all sequentially compact spaces. 
    \item[(b)] If $\I$ is nowhere tall, then $\I$ and $\Fin$ are $\leq_{BW}$-equivalent. In particular, if $\I$ is nowhere tall, then $\BW(\I)$ is the class of all sequentially compact spaces.
\end{itemize}
\end{proposition}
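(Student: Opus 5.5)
For (a), the forward direction goes through Proposition~\ref{prop:basic-properties}(d). If $\I\leq_{BW}\Fin$, then $\I\leq_K\Fin$. By the fact recalled in the preliminaries, $\I$ is tall if and only if $\I\not\leq_K\Fin$, so $\I$ is not tall.

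For the converse, I will build an explicit witness. Assume $\I$ is an ideal on $\omega$ that is not tall. Then there is an infinite $D\subseteq\omega$ such that every infinite subset of $D$ is $\I$-positive, i.e.\ $\I|D=\Fin|D$. Let $f:\omega\to\omega$ be a bijection onto $D$, say the increasing enumeration of $D$. Fix $A\in\Fin^+$, i.e.\ $A$ infinite, and put $B=f[A]$, which is an infinite subset of $D$ and hence lies in $\I^+$. If $C\cap B\in\I^+$, then $C\cap B$ is infinite, so $A\cap f^{-1}[C]=f^{-1}[C\cap B]$ is infinite because $f$ is injective. Thus $A\cap f^{-1}[C]\in\Fin^+$, and $f$ witnesses $\I\leq_{BW}\Fin$. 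The ``in particular'' clause then follows from Theorem~\ref{thm:zawieranieBW} together with $\BW(\Fin)$ being the class of sequentially compact spaces.

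For (b), one direction is already done: a nowhere tall ideal is not tall (take $A=\omega$), so (a) gives $\I\leq_{BW}\Fin$. For $\Fin\leq_{BW}\I$ I will use the identity map $f=\mathrm{id}$. Given $A\in\I^+$, the restriction $\I|A$ is not tall, so there is an infinite $B\subseteq A$ with $\I|B=\Fin|B$; this $B$ is in $\Fin^+$ and $B\subseteq f[A]$. Now take $C$ with $C\cap B\in\Fin^+$. Then $C\cap B$ is an infinite subset of $B$, so it is $\I$-positive, and therefore $A\cap f^{-1}[C]\supseteq C\cap B$ lies in $\I^+$. The statement about classes follows by applying Theorem~\ref{thm:zawieranieBW} in both directions.

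Nothing here should be a real obstacle. The only point needing care is translating ``not tall'' into the existence of a set $D$ on which $\I$ coincides with $\Fin$, which is just the negation of the definition of tallness.
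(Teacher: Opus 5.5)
Your proposal is correct and follows the paper's proof. The forward direction of (a) goes through Proposition~\ref{prop:basic-properties}(d), and (b) uses the identity map exactly as in the paper. The one cosmetic difference is in the converse of (a): you write down the injection onto a set $D$ with $\I|D=\Fin|D$ directly, whereas the paper invokes $\I\cong\Fin\oplus\J$ and Proposition~\ref{prop:basic-properties}(b), whose witness $i\mapsto(i,0)$ is the same map.
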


\begin{proof}
(a): The implication $\impliedby$ follows from Proposition \ref{prop:basic-properties}(b), since any non-tall ideal is isomorphic to $\Fin\oplus\J$ for some ideal $\J$. On the other hand, the implication $\implies$ follows from Proposition \ref{prop:basic-properties}(d), since $\I\not\leq_K\Fin$ for tall ideals $\I$. The second part follows from Theorem \ref{thm:zawieranieBW} and the fact that $\BW(\Fin)$ is the class of all sequentially compact spaces.

(b): Let $\I$ be nowhere tall. Then $\I\leq_{BW}\Fin$ by item (a). We will show that $\Fin\leq_{BW}\I$. The second part will then follow from Theorem \ref{thm:zawieranieBW} and the fact that $\BW(\Fin)$ is the class of all sequentially compact spaces.

Without loss of generality we may assume that $\I$ is an ideal on $\omega$. Let $f:\omega \to \omega$ be the identity function. Fix $A \in \I^+$. Since $\I$ is nowhere tall, we can find an infinite $B\subseteq A$ such that $\I|B=\Fin|B$. Then $B = f[B]\subseteq f[A]$ and $B \in \fin^+$. Fix $C$ such that $C\cap B \in \fin^+$ and observe that then $A \cap f^{-1}[C]=A \cap C \supseteq B\cap C\in \I^+$.
\end{proof}

In Remark \ref{rem:nowhere tall vs non-tall} we discuss possible strengthenings of Proposition \ref{prop:Fin_vs_tall}(b).

\section{The new preorder among some well-known ideals}
\label{sec:porownania}

In this section we examine $\leq_{BW}$ among some of the ideals introduced in Section \ref{sec:preliminaries}. Firstly, we give examples of ideals that are $\leq_{BW}$-above $\Fin$.

\begin{proposition}\
\label{prop:idealy_powyzej_Fin}
\begin{itemize}
    \item[(a)] $\Fin\leq_{BW}\mathcal{W}$. 
    \item[(b)] $\Fin\leq_{BW}\Fin^2$. 
    \item[(c)] $\fin \leq_{BW} \I$, for every generalized density ideal $\I$. In particular, $\Fin\leq_{BW}\I_d$.
\end{itemize}
\end{proposition}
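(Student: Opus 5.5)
Throughout, $\I=\Fin$, so the definition of $\Fin\leq_{BW}\J$ asks for a map $f:\bigcup\J\to\omega$ with the following property: for every $A\in\J^+$ there is an infinite $B\subseteq f[A]$ such that $A\cap f^{-1}[C]\in\J^+$ whenever $C\cap B$ is infinite. In all three cases I will choose $f$ to collapse suitable finite blocks of $\bigcup\J$ to single points. The aim is that, for every $A\in\J^+$, infinitely many blocks contain a piece of $A$ that is ``uniformly large'', and that any infinite union of such pieces is $\J$-positive.

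Item (b) needs no new work. Since $\Fin^2=\Fin\otimes\Fin$, it is exactly Proposition \ref{prop:basic-properties}(c) with $\I=\J=\Fin$.

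For item (c), let $\I$ be given by lower semicontinuous submeasures $(\mu_n)$ and pairwise disjoint finite sets $(I_n)$. Define $f(k)=n$ for $k\in I_n$, and $f(k)=0$ for $k\notin\bigcup_n I_n$.
\begin{itemize}
\item Fix $A\in\I^+$. Then there is $\varepsilon>0$ with $\mu_n(A\cap I_n)>\varepsilon$ for infinitely many $n$. Let $B$ be the set of these $n$.
\item Since $\mu_n(\emptyset)=0$, each such $A\cap I_n$ is nonempty, so $B\subseteq f[A]$.
\item If $C\cap B$ is infinite, then $A\cap f^{-1}[C]\supseteq\bigcup_{n\in C\cap B}(A\cap I_n)$.
\item By monotonicity of the submeasures, $\mu_n\bigl(A\cap f^{-1}[C]\cap I_n\bigr)>\varepsilon$ for infinitely many $n$, so $A\cap f^{-1}[C]\notin\I$.
\end{itemize}
The ``in particular'' part follows because $\I_d$ is a generalized density ideal: take dyadic blocks $I_n$ and $\mu_n(X)=|X|/2^n$.

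Item (a) is the main obstacle. Take the dyadic blocks $I_0=\{0,1\}$ and $I_n=[2^n,2^{n+1})$ for $n\geq1$, and let $f$ send $I_n$ to $n$. The key step is a pigeonhole lemma: every arithmetic progression $P$ of length $L$ contains a sub-progression of length at least $L/6-1$ lying inside a single block $I_n$. To prove it, let $a$ and $t$ be the first and last terms of $P$.
\begin{itemize}
\item If $a>t/4$, then $P\subseteq(t/4,t]$, which meets at most three dyadic blocks.
\item Otherwise, the terms of $P$ lying in $[t/4,t]$ form a contiguous sub-progression of length at least about $(3/4)L$, again inside at most three blocks.
\item In both cases some single block contains at least roughly $L/6$ consecutive terms of $P$, i.e. an arithmetic progression of that length.
\end{itemize}
The constants can be adjusted; only unboundedness in $L$ matters.

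Now fix $A\in\cW^+$, so $A$ contains progressions of every length. By the lemma, for every $k$ some block $I_n$ has $A\cap I_n$ containing a progression of length $k$. Each block is finite, so it bounds the possible lengths, and hence these $n$ must vary with $k$. This yields infinitely many indices $n_0<n_1<\dots$ such that $A\cap I_{n_j}$ contains a progression of length $j$. Put $B=\{n_j:j\in\omega\}$; clearly $B\subseteq f[A]$. If $C\cap B$ is infinite, then $A\cap f^{-1}[C]$ contains $A\cap I_{n_j}$ for infinitely many $j$. It therefore contains arbitrarily long arithmetic progressions, so $A\cap f^{-1}[C]\in\cW^+$, as required.
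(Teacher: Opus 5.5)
Your proposal is correct. Items (b) and (c) match the paper's proof. In (c) the paper pads the partition with dummy singleton blocks carrying the zero submeasure so that $\bigcup_n I_n=\omega$, while you send the uncovered points to $0$. Both work, since the extra points only enlarge $f^{-1}[C]$.

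Item (a) is where you take a genuinely different route. The paper does not work on all of $\omega$. It chooses the sparse set $E=\bigcup_n\{10^n,\dots,10^n+n\}\in\cW^+$, on which every progression of length at least $3$ is forced to lie inside a single block. It then proves $\Fin\leq_{BW}\cW|E$ with the block-collapsing map. Finally it transfers this to $\cW$ using the homogeneity of $\cW$, imported from the literature, together with isomorphism invariance of $\leq_{BW}$.

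You instead collapse the dyadic blocks of all of $\omega$, and replace exact confinement by a scale/pigeonhole estimate. The terms of a progression with last term $t$ that lie in $[t/4,t]$ make up about three quarters of it, and $[t/4,t]$ meets at most three dyadic blocks. So some single block captures a sub-progression of length proportional to $L$. Your lemma is valid; the count even gives $(L-1)/4$.

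The remaining steps match the paper. You pick $n_0<n_1<\cdots$ using finiteness of the blocks, and note that $A\cap f^{-1}[C]$ contains progressions of unbounded length whenever $C\cap B$ is infinite.

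What each approach buys:
\begin{itemize}
\item Your argument is self-contained: it needs no homogeneity result and gives an explicit witness on $\omega$ itself.
\item It also shows that one and the same dyadic-block map witnesses both $\Fin\leq_{BW}\cW$ and $\Fin\leq_{BW}\I_d$.
\item The paper trades your pigeonhole estimate for exact combinatorial confinement on a thin set, at the cost of relying on the external homogeneity theorem.
\end{itemize}

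One cosmetic fix: index the $n_j$ from $j\geq 1$, or ask for progressions of length $j+1$. A progression of length $0$ does not guarantee $A\cap I_{n_0}\neq\emptyset$, which you need for $B\subseteq f[A]$.
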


\begin{proof}
(a): By \cite[Example 2.6]{KT}, $\cW$ is homogeneous. Therefore, it is enough to show that there exists $E \in \cW^+$ such that $\Fin \leq_{\BW} \cW|E$ (see Proposition \ref{prop:basic-properties}(f)).   Let $E_n=\{10^n, 10^n+1, \ldots, 10^n +n  \}   $ for all $n \in \omega$ and $E=\bigcup_{n \in \omega}E_n\in\cW^+$. Now, let the function $f: E \rightarrow \omega$ be such that:
\[ f(i) = n\ \iff\ i \in E_n\]
for all $i \in E$. Fix $A \in(\cW|E)^+$ and for all $n \in \omega$ take arithmetic progression $A_n$ of length $n$ such that $A_n \subseteq A$. 

For all $n > 2$ take $k_n$ such that $A_n \subseteq E_{k_n}$. This is possible. Indeed, suppose that there is $n \in \omega$ for which $A_n \nsubseteq E_i$ for all $i \in \omega$. Let $(a_j)_{j<n}$ be the increasing enumeration of $A_n$ and $r>0$ be the common difference of $A_n$. We can find the smallest $k$ for which there are $l_1,l_2\in \omega$ such that $l_1<l_2$, $a_k \in E_{l_1}$ and $a_{k+1} \in E_{l_2}$. Then $r\geq10^{l_2}-10^{l_1}-l_1\geq 10^{l_1+1}-10^{l
_1}-l_1\geq 10^{l_1}+l_1$. Hence, $k=0$. There are two possible cases:
\begin{itemize}
\item If $a_{k+2}=a_2 \in E_{l_2}$, we have $a_2-a_1\leq 10^{l_2}+l_2-10^{l_2}=l_2$ and $a_1-a_0\geq10^{l_2}-10^{l_1}-l_1>10^{l_2-1}\geq l_2$. A contradiction with the fact that $A_n$ is an arithmetic progression. 
\item Now we have to consider the case in which $a_2 \notin E_{l_2}$. Then $a_2-a_1\geq10^{l_2+1}-10^{l_2}-l_2>10^{l_2}+l_2$ and $a_1-a_0\leq a_1\leq 10^{l_2}+l_2$. Just like in the previous case, it leads us to a contradiction with the fact that $A_n$ is an arithmetic progression.
\end{itemize}
Therefore, for every $n > 2$ it is possible to find $k_n$ such that $A_n \subseteq E_{k_n}$.

Define $B=\{k_i:i>2\}$. Clearly, such $B$ is infinite and contained in $f[A]$. Take any $C \subseteq \omega$ with $C \cap B \in \fin^+$. Then $f^{-1}[C] \supseteq \bigcup\{E_{k_n} : k_n \in C\cap B\}$ and hence $f^{-1}[C]\cap A \supseteq \bigcup\{A_n: k_n \in C \cap B \} \in (\cW|E)^+$.

\medskip

(b): Follows from Proposition \ref{prop:basic-properties}(c).

\medskip

(c): If $\I$ is a generalized density ideal, there is a sequence of lower semicontinuous submeasures $(\mu_n)_{n \in \omega}$ and a sequence of pairwise disjoint finite sets $(I_n)_{n \in \omega}$ such that $\I=\{A \subseteq \omega: \lim_{n \rightarrow \infty} \mu_n(A\cap I_n)=0\}$. Without loss of generality we may assume that $\bigcup_{n\in\omega}I_n=\omega$ (by adding, if necessary, to the sequences $(I_n)_{n \in \omega}$ and $(\mu_n)_{n \in \omega}$ new elements $I'_b$ and $\mu'_b$, for $b\in\omega\setminus\bigcup_{n\in\omega}I_n$, such that $I'_b=\{b\}$ and $\mu'_b(A)=0$ for all $A\subseteq\omega$). Let $f:\omega \rightarrow \omega$ be given by 
\[ f(k)=n\ \iff\ k \in I_n.\]
We will prove that such function $f$ witnesses $\fin \leq_{BW} \I$. Fix any $A \in \I^+$. Then $\lim_{n \rightarrow \infty}\mu_n(A\cap I_n) \neq 0$. Hence, there exist $k \in \omega$ and an increasing sequence of natural numbers $(n_i)_{i \in \omega}$ satisfying $\mu_{n_i}(A\cap I_{n_i})>\frac{1}{k}$ for all $i \in \omega$. Now, let $B=\{n_i:i \in \omega\}$. Clearly, $\fin^+\ni B\subseteq f[A]$. Fix any $C \subseteq \omega$ that satisfies $A\cap f^{-1}[C] \in \I$. Consequently, there can be only finitely many $i \in \omega$ for which $n_i \in C$. Therefore, $C\cap B$ is finite.
\end{proof}

\begin{corollary}\
\label{cor:idealy_powyzej_Fin}
    \begin{itemize}
        \item[(a)] Every $\BW(\cW)$ space is sequentially compact.
        \item[(b)] Every $\BW(\fin^2)$ space is sequentially compact.
        \item[(c)] If $\I$ is a generalized density ideal, then every $\BW(\I)$ space is sequentially compact. In particular, every $\BW(\I_d)$ space is sequentially compact.
    \end{itemize}
\end{corollary}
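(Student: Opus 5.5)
The plan is to derive all three items at once from Proposition \ref{prop:idealy_powyzej_Fin} combined with Theorem \ref{thm:zawieranieBW}. The bridge between them is the observation, recorded in the introduction, that $\BW(\Fin)$ is precisely the class of sequentially compact spaces.

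For (a), Proposition \ref{prop:idealy_powyzej_Fin}(a) gives $\Fin\leq_{BW}\cW$. Applying Theorem \ref{thm:zawieranieBW} with $\I=\Fin$ and $\J=\cW$ then yields $\BW(\cW)\subseteq\BW(\Fin)$. Hence every $\BW(\cW)$ space is sequentially compact.

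Item (b) is handled the same way. Proposition \ref{prop:idealy_powyzej_Fin}(b) gives $\Fin\leq_{BW}\Fin^2$, and Theorem \ref{thm:zawieranieBW} then gives $\BW(\Fin^2)\subseteq\BW(\Fin)$.

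For (c), let $\I$ be a generalized density ideal. Proposition \ref{prop:idealy_powyzej_Fin}(c) gives $\Fin\leq_{BW}\I$, so Theorem \ref{thm:zawieranieBW} yields $\BW(\I)\subseteq\BW(\Fin)$. The particular case follows because $\I_d$ is a generalized density ideal. To see this, take $I_n=[2^n,2^{n+1})$ and $\mu_n(A)=|A\cap I_n|/2^n$. These are lower semicontinuous submeasures, and the standard dyadic-block argument gives $\I_d=\{A:\lim_n\mu_n(A\cap I_n)=0\}$. Alternatively, one can simply quote Farah for this representation.

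There is no real obstacle here, since each item is a one-line application of the earlier results. The only point needing any care is confirming that $\I_d$ has a generalized density representation, which the paper already implicitly asserts as "the best known member of this class."
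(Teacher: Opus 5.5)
Your proposal is correct and is the paper's argument: each item follows by combining Proposition \ref{prop:idealy_powyzej_Fin} with Theorem \ref{thm:zawieranieBW} and the fact that $\BW(\Fin)$ is exactly the class of sequentially compact spaces. Your dyadic-block representation of $\I_d$ is also valid, whereas the paper simply takes it as known from the preliminaries.
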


\begin{proof}
Follows from Theorem \ref{thm:zawieranieBW}, Proposition \ref{prop:idealy_powyzej_Fin} and the fact that $\BW(\Fin)$ is the class of all sequentially compact spaces.
\end{proof}

\begin{remark}
Filip\'{o}w and Tryba in \cite[Question 3]{FT} asked whether there is a $\BW(\mathcal{W})$ space which is not sequentially compact. Corollary \ref{cor:idealy_powyzej_Fin}(a) gives a negative answer.
\end{remark}

Next example shows that the implication from Theorem \ref{thm:zawieranieBW} (i.e., $\I\leq_{BW}\J\ \implies\ \BW(\J)\subseteq\BW(\I)$) cannot be reversed in general.

\begin{example}
\label{ex:Fin^2}
$\Fin^2\not\leq_{BW}\Fin$, but $\BW(\Fin)=\BW(\Fin^2)$. 
\end{example}

\begin{proof}
$\Fin^2\not\leq_{BW}\Fin$ follows from Proposition \ref{prop:basic-properties}(d) and the fact that $\Fin^2\not\leq_{K}\Fin$ (as $\Fin^2$ is a tall ideal). The inclusion $\BW(\Fin)\supseteq\BW(\Fin^2)$ is proved in Corollary \ref{cor:idealy_powyzej_Fin}(b). Thus, we only need to show that $\BW(\Fin)\subseteq\BW(\Fin^2)$.

Let $X$ be a sequentially compact space and fix any sequence $f:\omega^2\to X$. Then for each $n\in\omega$ we can find $M_n\in\fin^+$ and $x_n\in X$ such that $(f(n,i))_{i\in M_n}$ converges to $x_n$. Moreover, there are $M\in\Fin^+$ and $x\in X$ such that $(x_n)_{n\in M}$ converges to $x$. Define $A=\bigcup_{n\in M}\{n\}\times M_n$. Then $A\in(\fin^2)^+$. We will show that $(f(i,j))_{(i,j)\in A}$ is $\fin^2$-convergent to $x$. 

Let $U$ be an open neighborhood of $x$. Then there is $K\in\fin$ such that $x_n\in U$ for all $n\in M\setminus K$, i.e., $U$ is an open neighborhood of each $x_n$ with $n\in M\setminus K$. Thus, for each $n\in M\setminus K$ there is $F_n\in\fin$ such that $f(n,i)\in U$ for all $i\in M_n\setminus F_n$. Observe that
\[
B=(K\times\omega)\cup\bigcup_{n\in M\setminus K}(\{n\}\times F_n)\in\Fin^2
\]
and $f(n,i)\in U$ for all $(n,i)\in A\setminus B$.
\end{proof}

Now we give examples of ideals that are not $\leq_{BW}$-above $\Fin$.

\begin{proposition}\
\label{prop:idealy_nie_powyzej_Fin}
\begin{itemize}
    \item[(a)] $\Fin\not\leq_{BW}\I_{1/n}$. 
    \item[(b)] $\Fin\not\leq_{BW}\ED_-$, where $\ED_-=\{A\subseteq\omega^2: \exists_{k\in\omega}\ \forall_{n\in\omega}\ |A_{(n)}|\leq k\}$. 
    \item[(c)] $\Fin\not\leq_{BW}\I$ for every maximal ideal $\I$. 
    \item[(d)] $\Fin\not\leq_{BW}\nwd$.
\end{itemize}
\end{proposition}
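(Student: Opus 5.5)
The plan is to unwind the definition. $\Fin\leq_{BW}\J$ via $f:\bigcup\J\to\omega$ means the following: for every $A\in\J^+$ there is an infinite $B\subseteq f[A]$ such that $A\cap f^{-1}[B']\in\J^+$ for every infinite $B'\subseteq B$. (Sets $C$ with $C\cap B$ infinite are exactly those containing such a $B'$, and $\J^+$ is upward closed.) So to prove $\Fin\not\leq_{BW}\J$ I will fix an arbitrary $f$ and produce $A\in\J^+$ such that every infinite $B\subseteq f[A]$ has an infinite $B'\subseteq B$ with $A\cap f^{-1}[B']\in\J$.

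In all four items I first dispose of a trivial case. If some fiber $f^{-1}(n)$ is $\J$-positive, take $A=f^{-1}(n)$; then $f[A]$ is finite and there is no infinite $B$. So from now on assume all fibers of $f$ are in $\J$.

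(c) Let $\J$ be maximal. Then $\K=\{C\subseteq\omega: f^{-1}[C]\in\J\}$ is a maximal ideal, and it contains $\Fin$ by the standing assumption. Take $A=\bigcup\J$. Any infinite $B$ splits into two disjoint infinite sets, and one of them, $B'$, lies in $\K$, so $f^{-1}[B']\in\J$.

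(a) Write $w(x)=1/(x+1)$. Each fiber has finite $w$-sum, while $\omega$ has infinite $w$-sum. I will build $A$ so that the fiber sums $a_n=\sum_{x\in A\cap f^{-1}(n)}w(x)$ satisfy $a_n\to 0$ and $\sum_n a_n=\infty$. Given such $A$, any infinite $B$ has an infinite $B'\subseteq B$ with $\sum_{n\in B'}a_n<\infty$, i.e.\ $A\cap f^{-1}[B']\in\I_{1/n}$.

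To construct $A$, I choose finite sets $F_0,F_1,\dots$ greedily. At stage $k$, I pick a finite set $F_k$ of elements with $w(x)<2^{-k}$, disjoint from the fibers already used in more than a bounded way. Specifically, I require that each fiber contributes at most $2^{-k}$ to $F_k$, that $\sum_{F_k}w\geq 1$, and that $F_k$ only involves fibers $n\geq k$. This is possible because $\omega\setminus f^{-1}[k]$ has infinite sum (each fiber has finite sum), and the mass of each individual fiber beyond a large threshold is small. Then $A=\bigcup_k F_k$ works, once I also check that each fiber meets only finitely many $F_k$ with nonnegligible mass. Arranging this bookkeeping so that $a_n\to0$ exactly is the main technical obstacle of the whole proposition; I would handle it by letting stage $k$ use only fibers $n$ with $n\geq k$ and weight at most $2^{-k}$ per fiber, so that $a_n\leq\sum_{k\leq n}2^{-k}\cdot(\text{number of stages touching }n)$, and forcing each fiber to be touched by at most one stage.

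(b) Fibers in $\ED_-$ have bounded column sections, so $f$ is finite-to-one on each column. Hence each column $\{m\}\times\omega$ has infinite $f$-image. I choose columns $m_0<m_1<\dots$ and, in column $m_k$, $k$ points with pairwise distinct $f$-values not used before. Let $A$ be the union of all these points. Then $A\in\ED_-^+$ and each fiber meets $A$ in at most one point. Given an infinite $B\subseteq f[A]$, I thin it to $B'$ using at most one value per column; this is possible since each column contains only finitely many points of $A$. Then $A\cap f^{-1}[B']$ has columns of size at most $1$, so it lies in $\ED_-$.

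(d) Take $A=\Q\cap[0,1]$, which is in $\nwd^+$, and enumerate the rational open intervals $I_0,I_1,\dots$. Given an infinite $B=\{b_0<b_1<\dots\}$, I inductively pick $b'_j\in B$ increasing, together with intervals $J^j_k\subseteq I_k$ for $k\leq j$, decreasing in $j$, such that $J^j_k$ misses the closures of $f^{-1}(b'_i)$ for all $i\leq j$. This works since each fiber is nowhere dense and only finitely many constraints occur at each step. Each $J_k=\bigcap_j J^j_k$ may be empty, so I instead keep the closures $\overline{J^{j+1}_k}\subseteq J^j_k$ and pick a point $q_k$ in the intersection, or more simply take $J_k=J^k_k$ and require all later fibers to avoid a fixed smaller subinterval $J'_k\subseteq J^k_k$. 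Then $\bigcup_j f^{-1}(b'_j)$ misses a nonempty open subset of every $I_k$, so it is nowhere dense, i.e.\ $A\cap f^{-1}[B']\in\nwd$.
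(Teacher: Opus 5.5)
\textbf{The gap is in (d).} Taking $A=\Q\cap[0,1]$ cannot work in general. Partition $\Q\cap[0,1]$ into finite sets $F_n$ such that $F_n$ meets every open interval of length $>1/(n+1)$. For instance, put into $F_n$ the first unused rational in a fixed enumeration, together with one unused rational from each interval $(\frac{j}{2n+2},\frac{j+1}{2n+2})$, $j<2n+2$. Let $f(x)=n$ for $x\in F_n$. All fibers are finite, hence in $\nwd$. But for every infinite $B'\subseteq\omega$ the set $f^{-1}[B']$ is dense, so $A\cap f^{-1}[B']\in\nwd^+$.

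This is exactly where your induction breaks. All but finitely many fibers meet any fixed interval $J'_k$, so you cannot make later fibers avoid it. The other alternative, a single point $q_k\in\bigcap_j J^j_k$ missed by the union, says nothing about nowhere density.

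The missing idea is to thin $A$ first, so that it stays dense but meets each fiber in at most one point. As in the paper, pick $x_n\in\Q\cap I_n$ with the values $f(x_n)$ pairwise distinct; this is possible since finite unions of fibers are nowhere dense. Let $A=\{x_n:n\in\omega\}$. For infinite $B\subseteq f[A]$, Bolzano--Weierstrass gives an infinite $D\subseteq\{n:f(x_n)\in B\}$ with $(x_n)_{n\in D}$ convergent. Then $C=\{f(x_n):n\in D\}$ satisfies $A\cap f^{-1}[C]=\{x_n:n\in D\}\in\nwd$.

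\textbf{The other items are fine.} Item (b) is essentially the paper's argument.

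Your (c) is correct and more elementary than the paper's. The paper goes through Theorem~\ref{thm:zawieranieBW}, Bernstein's theorem that compact spaces are $\BW(\I)$ for maximal $\I$, and the existence of compact spaces that are not sequentially compact. You only observe that $\{C: f^{-1}[C]\in\J\}$ is a maximal ideal, so one half of any splitting of $B$ into two infinite pieces lies in it.

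In (a) the plan works, but two justifications should be replaced. Write $p_n=\sum_{x\in f^{-1}(n)}w(x)$.
\begin{enumerate}
\item $a_n\to0$ holds because each $F_k$ is finite, so only finitely many fibers get mass from stages $k<K$, and each fiber is touched by only one stage. The bound $a_n\le\sum_{k\le n}2^{-k}\cdot(\ldots)$ does not give this.
\item Stage $k$ is feasible because the capped sum $\sum_n\min(p_n,2^{-k-1})$ over the still-unused fibers diverges. Either infinitely many fibers have mass at least $2^{-k-1}$, or capping changes only finitely many terms. This dichotomy is precisely the paper's Case 1/Case 2 split.
\end{enumerate}
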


\begin{proof}
(a): Take any function $f : \omega\rightarrow \omega$ and denote $S_n= f^{-1}[\{n\}]$ for all $n \in \omega$. In case there is $n \in \omega$ with $S_n\in \I_{1/n}^+ $, the set $S_n$ is the one we need (as there is no $\Fin^+\ni B\subseteq \{n\}=f[S_n]$). Let us now consider the case when all $S_n \in \I_{1/n}$. Define numbers $p_n= \sum_{i \in S_n}\frac{1}{i+1}<\infty$ for all $n \in \omega$. We will consider two cases. 

\smallskip

\textit{Case 1:} $\lim_{n \rightarrow \infty} p_n=0$. Let $A=\omega$. Fix any $\Fin^+ \ni B\subseteq f[A]$. Now, inductively for each $n\in\omega$ take $c_n\in B\setminus\{c_i:i<n\}$ such that $p_{c_n} \leq \frac{1}{2^n}$. Define $C=\{ c_n:n\in\omega \}$. For that $C$ it holds that $C\cap B=C \in \fin^+$ and $A\cap f^{-1}[C]=f^{-1}[C]=\bigcup_{n\in\omega}S_{c_n}\in \I_{1/n}$.

\smallskip

\textit{Case 2:} $\lim_{n \rightarrow\infty}p_n \neq 0$. Then we can find a set $Z\in\Fin^+$ and $p>0$ such that $p_{z}>p$ for all $z\in Z$. Let $(z_n)_{n\in\omega}$ be the increasing enumeration of $Z$. Take $k\in\omega$ such that $\frac{1}{k+1}<p$. Observe that it is possible to find a set $A\subseteq\bigcup_{n \geq k}S_{z_n}$ such that  $$r_n=\sum_{i \in S_{z_{n}}\cap A}\frac{1}{i+1}\geq \frac{1}{n+1}$$ for all $n \geq k$ and moreover $\lim_{n \to \infty}r_n=0$. Clearly, such $A$ belongs to $\I^+_{1/n}$. Now, fix $\fin^+\ni B\subseteq f[A]\subseteq Z$. Similarly as in the previous case, inductively for each $n \in \omega$ take $c_n\in B\setminus\{c_i:i<n\}$ such that $$\sum_{i \in S_{c_n}\cap A}\frac{1}{i+1}\leq \frac{1}{2^n}\text{.}$$ Define $C=\{c_n:n \in\omega\}$. Then $C\cap B=C \in \fin^+$ and $f^{-1}[C]\cap A =\bigcup_{n \in \omega}S_{c_n} \cap A \in \I_{1/n}$.

\medskip

(b): Fix any $f:\omega^2\to\omega$. If $f^{-1}[\{n\}]\notin\ED_-$ for some $n\in\omega$, then put $A=f^{-1}[\{n\}]$ and observe that there is no $B\subseteq f[A]=\{n\}$ with $B\in\Fin^+$, so there is nothing to check. Thus, we can assume from now on that $f^{-1}[\{n\}]\in\ED_-$ for all $n\in\omega$. 

Inductively pick $k^n_i\in\omega$ for all $n\in\omega$ and $i\leq n$ such that $f(n,k^n_i)\neq f(m,k^m_j)$ whenever $(n,i)\neq(m,j)$. Observe that this is possible. Indeed, given $n\in\omega$ and $i\leq n$, the set 
\[
E^n_i=\{f(m,k^m_j): m<n,j\leq m\}\cup\{f(n,k^n_j): j<i\}
\]
is finite, so $f^{-1}[E^n_i]\in\ED_-$, which gives that $(\{n\}\times\omega)\setminus f^{-1}[E^n_i]\neq\emptyset$. 

Define $A=\{(n,k^n_i): n\in\omega,i\leq n\}$. Then $A\in(\ED_-)^+$ as $|A_{(n)}|=n+1$ for all $n\in\omega$. Fix any infinite $B\subseteq f[A]$. Since the sets $\{f(n,k^n_i):i\leq n\}$ are finite, we can find an infinite $C\subseteq B$ such that $|C\cap \{f(n,k^n_i):i\leq n\}|\leq 1$ for all $n\in\omega$. Then $B\cap C=C\in\Fin^+$, but $A\cap f^{-1}[C]\in\ED_-$, as $|(A\cap f^{-1}[C])_{(n)}|\leq 1$ for all $n\in\omega$. 

\medskip

(c): Let $\I$ be a maximal ideal and assume towards contradiction that $\Fin\leq_{BW}\I$. Then $\BW(\I)\subseteq\BW(\Fin)$ (by Theorem \ref{thm:zawieranieBW}). However, $\BW(\I)$ contains all compact spaces (by \cite[Theorems 3.1 and 3.3]{MR251697}; see also \cite[Theorem 1.2]{FT}) and there are compact spaces which are not sequentially compact (i.e., are not $\BW(\Fin)$), a contradiction. 

(d): Let $f:\mathbb{Q}\cap[0,1]\to\omega$ be arbitrary. We need to find $A\in\nwd^+$ such that for all $B\subseteq f[A]$, $B\in\Fin^+$ there is an infinite $C\subseteq B$ such that $A\cap f^{-1}[C]\in\nwd$. 

If $f^{-1}[\{n\}]\in\nwd^+$ for some $n\in\omega$, it suffices to put $A=f^{-1}[\{n\}]$. Thus, we can assume that $f^{-1}[\{n\}]\in\nwd$ for all $n\in\omega$.

Let $\{U_n:n\in\omega\}$ be an enumeration of all non-empty open sub-intervals of $[0,1]$ with rational endpoints. Inductively find a sequence $(x_n)_{n\in\omega}$ such that $x_n\in\mathbb{Q}\cap U_n$ and $f(x_n)\neq f(x_m)$ for all $n,m\in\omega$, $n\neq m$. This is possible as given any $n\in\omega$, the set $f^{-1}[\{f(x_m):m<n\}]$
is nowhere dense. Put $A=\{x_n:n\in\omega\}$. Then $A\in\nwd^+$ as it is dense in $[0,1]$.

Fix any $B\subseteq f[A]$, $B\in\Fin^+$. Then there is an infinite $D\subseteq \{n\in\omega: f(x_n)\in B\}$ such that $(x_n)_{n\in D}$ is convergent. Put $C=\{f(x_n):n\in D\}$. Then $C\subseteq B$, $C$ is infinite and $A\cap f^{-1}[C]=\{x_n:n\in D\}\in\nwd$.
\end{proof}

\begin{remark}
\label{rem:BWvsK}
Note that the implication from Proposition \ref{prop:basic-properties}(d) cannot be reversed, i.e., there are ideals $\I$ and $\J$ such that $\I\leq_K\J$, but $\I\not\leq_{BW}\J$. Indeed, this is the case for $\I=\Fin$ and $\J=\I_{1/n}$ (by Proposition \ref{prop:idealy_nie_powyzej_Fin}(a)). Other items of Proposition \ref{prop:idealy_nie_powyzej_Fin} provide more similar examples.
\end{remark}

\begin{remark}
Among tall $\mathbf{F_\sigma}$ ideals there are ideals $\leq_{BW}$-above $\Fin$ (Proposition \ref{prop:idealy_powyzej_Fin}(a)) as well as not $\leq_{BW}$-above $\Fin$ (Proposition \ref{prop:idealy_nie_powyzej_Fin}(a)). Similarly, among non-tall $\mathbf{F_\sigma}$ ideals there are ideals $\leq_{BW}$-above $\Fin$ (for instance $\Fin$) as well as not $\leq_{BW}$-above $\Fin$ (Proposition \ref{prop:idealy_nie_powyzej_Fin}(b)).
\end{remark}

\begin{remark}
\label{rem:nowhere tall vs non-tall}
By Proposition \ref{prop:Fin_vs_tall}(b), every nowhere tall ideal is $\leq_{BW}$-equivalent to $\Fin$. Observe that there are other ideals $\leq_{BW}$-equivalent to $\Fin$. Indeed, the ideal $\mathcal{W}\oplus\Fin$ is a good example, since it is not nowhere tall (its restriction to $\omega\times\{0\}$ is tall), but it is $\leq_{BW}$-equivalent to $\Fin$ ($\mathcal{W}\oplus\Fin\leq_{BW}\Fin$ follows from Proposition \ref{prop:basic-properties}(b), while $\Fin\leq_{BW}\mathcal{W}\oplus\Fin$ follows from Propositions \ref{prop:basic-properties}(a), \ref{prop:basic-properties}(g) and \ref{prop:idealy_powyzej_Fin}(a)).

On the other hand, not every non-tall ideal is $\leq_{BW}$-equivalent to $\Fin$. For example $\ED_-$ is a non-tall ideal, but as it is shown in Proposition \ref{prop:idealy_nie_powyzej_Fin}(b) it is not $\leq_\BW$-above $\fin$.
\end{remark}

\begin{proposition}
\label{prop:WvsSummable}
$\mathcal{W}\not\leq_{BW}\I_{1/n}$ and $\I_{1/n}\not\leq_{BW}\mathcal{W}$.
\end{proposition}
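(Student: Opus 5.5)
The first half, $\mathcal{W}\not\leq_{BW}\I_{1/n}$, follows from results already proved, so no new construction is needed there. The second half, $\I_{1/n}\not\leq_{BW}\mathcal{W}$, I would prove directly from the definition. The idea is to make the definition of $\leq_{BW}$ fail vacuously: produce an $A\in\mathcal{W}^+$ whose image $f[A]$ is itself in $\I_{1/n}$, so that no $\I_{1/n}$-positive $B\subseteq f[A]$ exists at all.

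\emph{First part.} Suppose $\mathcal{W}\leq_{BW}\I_{1/n}$. By Proposition \ref{prop:idealy_powyzej_Fin}(a) we have $\Fin\leq_{BW}\mathcal{W}$. Transitivity (Proposition \ref{prop:basic-properties}(a)) then gives $\Fin\leq_{BW}\I_{1/n}$. This contradicts Proposition \ref{prop:idealy_nie_powyzej_Fin}(a).

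\emph{Second part.} Fix an arbitrary $f:\omega\to\omega$. I must find $A\in\mathcal{W}^+$ such that no $B\in\I_{1/n}^+$ with $B\subseteq f[A]$ satisfies the defining condition. The cases are as follows.
\begin{itemize}
\item If some fiber $f^{-1}[\{m\}]$ is in $\mathcal{W}^+$, take $A$ to be that fiber. Then $f[A]=\{m\}\in\I_{1/n}$, so no positive $B\subseteq f[A]$ exists.
\item Otherwise every fiber is in $\mathcal{W}$. Since $\mathcal{W}$ is an ideal, $f^{-1}[\{0,\dots,M\}]\in\mathcal{W}$ for every $M$. Hence its complement $f^{-1}[\omega\setminus\{0,\dots,M\}]$ is not in $\mathcal{W}$ (as $\omega\notin\mathcal{W}$), and so it contains arithmetic progressions of every length.
\end{itemize}

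In the second case, put $M_n=n2^n$ and choose, for each $n\geq 1$, an arithmetic progression $P_n$ of length $n$ with $f(k)>M_n$ for all $k\in P_n$. Let $A=\bigcup_{n\geq1}P_n$. Then $A\in\mathcal{W}^+$, because it contains progressions of every length. On the other hand, $f[P_n]$ has at most $n$ elements, each larger than $M_n$, so
\[
\sum_{v\in f[A]}\frac{1}{v+1}\leq\sum_{n\geq1}\frac{n}{n2^n+1}<\infty .
\]
Thus $f[A]\in\I_{1/n}$, and again there is no $B\in\I_{1/n}^+$ with $B\subseteq f[A]$. In both cases $f$ fails to witness $\I_{1/n}\leq_{BW}\mathcal{W}$, and $f$ was arbitrary.

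The only step needing care is the existence of the long progressions avoiding $f^{-1}[\{0,\dots,M_n\}]$, and that is immediate from the facts that $\mathcal{W}$ is a proper ideal and all fibers lie in $\mathcal{W}$.
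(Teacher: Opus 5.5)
Your proof is correct. The first half is exactly the paper's argument. For the second half, the paper combines Proposition \ref{prop:basic-properties}(d) with the known fact $\I_{1/n}\not\leq_K\mathcal{W}$, citing \cite[Proposition 4.1]{MR4797304}. Your argument replaces both ingredients.

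Producing, for every $f$, a set $A\in\mathcal{W}^+$ with $f[A]\in\I_{1/n}$ is literally a proof that $f$ is not a Kat\v{e}tov reduction. Your observation that the $\leq_{BW}$ condition then fails vacuously is the content of (d). So you are proving the stronger statement $\I_{1/n}\not\leq_K\mathcal{W}$ from scratch. You use the same ``positive fiber versus all fibers small'' dichotomy that drives Proposition \ref{prop:idealy_nie_powyzej_Fin}.

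The feature you isolate is this: $\mathcal{W}$-positivity is witnessed by finite pieces of bounded size ($n$ points at level $n$). Once all fibers lie in $\mathcal{W}$, these pieces can be placed where $f$ takes only huge values. That is precisely why no map can push $\mathcal{W}$-positive sets onto $\I_{1/n}$-positive ones. The bounded size of the pieces is essential, since the same scheme fails for, e.g., $\I_{1/n}$ in place of $\mathcal{W}$.

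The paper's route is shorter given the literature. Yours is self-contained and makes the underlying mechanism explicit, at the cost of a few lines.
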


\begin{proof}
$\I_{1/n}\not\leq_{BW}\mathcal{W}$ follows from Proposition \ref{prop:basic-properties}(d) as $\I_{1/n}\not\leq_{K}\mathcal{W}$ (see \cite[Proposition 4.1]{MR4797304}). On the other hand, $\mathcal{W}\leq_{BW}\I_{1/n}$ would imply $\Fin\leq_{BW}\I_{1/n}$ (by Propositions \ref{prop:basic-properties}(a) and \ref{prop:idealy_powyzej_Fin}(a)), which contradicts Proposition \ref{prop:idealy_nie_powyzej_Fin}(a).
\end{proof}


\section{Main result}
\label{sec:main}

In this section, we prove our main result.

\begin{theorem}
\label{thm:space}
Assume CH. Let $\J$ be a nowhere maximal $P^+$ ideal and $\I$ be an ideal with the property $\hBW$. Then the following are equivalent:
\begin{itemize}
    \item[(a)] $\J\not\leq_{BW}\I$,
    \item[(b)] there is a $\BW(\I)$ space which is not $\BW(\J)$,
    \item[(c)] there is a Hausdorff non-compact $\BW(\I)$ space which is not $\BW(\J)$.
\end{itemize}
\end{theorem}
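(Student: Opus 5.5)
The implications (c)$\Rightarrow$(b) and (b)$\Rightarrow$(a) are immediate. The first is trivial. The second is the contrapositive of Theorem \ref{thm:zawieranieBW}: if $\J\leq_{BW}\I$ then $\BW(\I)\subseteq\BW(\J)$. So all the work lies in (a)$\Rightarrow$(c). Under CH and assuming $\J\not\leq_{BW}\I$, we must build a Hausdorff, non-compact space $X$ with $X\in\BW(\I)$ and $X\notin\BW(\J)$. We may take both ideals on $\omega$.

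The plan is a Mr\'owka-like construction in which $\omega$ is a discrete set of isolated points and the identity sequence $(n)_{n\in\omega}$ witnesses $X\notin\BW(\J)$. More precisely, $X=\omega\cup\{p_\alpha:\alpha<\omega_1\}$. Each $p_\alpha$ gets a neighbourhood base of the form $\{p_\alpha\}\cup(D_\alpha\setminus F)\cup(\text{neighbourhoods of some earlier points})$, where $D_\alpha\in\J^+$ and $F\in\J|D_\alpha$. The topology is designed so that any point $x$ and any $A\in\J^+$ with $(n)_{n\in A}$ $\J$-convergent to $x$ would force a contradiction.

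To make $X\notin\BW(\J)$ we use nowhere maximality of $\J$. Whenever a neighbourhood of a point meets $A$ in a $\J$-positive set, we split that set into two $\J$-positive pieces. We arrange the neighbourhood assignments so that one of the pieces can be excluded, which prevents $\J$-convergence. The $P^+$ property of $\J$ is used to take pseudo-intersections of countable decreasing chains. This lets us define new points at limit stages and keeps the resulting sets in $\J^+$. Hausdorffness and local countability come from the countable, inductive nature of the construction. Non-compactness is guaranteed by $\{p_\alpha\}$ being uncountable and closed discrete in a suitable sense, or simply by the infinite discrete set $\omega$ having no complete accumulation point of the required kind.

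The heart of the argument is $X\in\BW(\I)$. Under CH we enumerate all sequences $g:\omega\to X$ of the relevant kind, namely all maps into the countable part built so far, as $(g_\alpha)_{\alpha<\omega_1}$. At stage $\alpha$ we ensure that $g_\alpha$ has an $\I^+$ subsequence that is $\I$-convergent. If $g_\alpha$ takes values in $\omega$ on an $\I$-positive set, we apply $\J\not\leq_{BW}\I$ to $f=g_\alpha$. This yields $A\in\I^+$ such that for every $B\subseteq f[A]$ with $B\in\J^+$ there is $C$ with $C\cap B\in\J^+$ but $A\cap f^{-1}[C]\in\I$. Otherwise no positive $B$ exists, and we may use $f[A]\in\J$.

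We then define a new point $p_\alpha$ whose neighbourhoods are generated by $f[A]$. The sequence $(g_\alpha(n))_{n\in A}$ then $\I$-converges to $p_\alpha$. To check consistency with the $\neg\BW(\J)$ requirement, suppose $D\in\J^+$ makes $(n)_{n\in D}$ converge to $p_\alpha$. Taking $B=D\cap f[A]$, we get a $C$ whose removal still leaves a neighbourhood. This is exactly what the definition of $\not\leq_{BW}$ provides, and it is why that order is the right one.

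When $g_\alpha$ lands mostly among earlier points $p_\beta$, we use the $\hBW$ property of $\I$ via Theorem \ref{thm: hbw charakteryzacja}. The countably many earlier points, together with their countable bases, give a binary splitting tree of $A$. The theorem then produces a branch $x$ and a $B\in\I^+$ that is $\I$-almost contained in each node. We use this branch to either find an existing limit or adjoin a new point $p_\alpha$ as the limit.

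The main obstacle is this bookkeeping: choosing the neighbourhood bases of new points so that later additions never destroy earlier $\I$-convergences. At the same time, no $\J^+$ subsequence of $\omega$ may ever converge, and the construction must stay Hausdorff. I would first try the approach of forming a countable base at each new point and verifying by induction that every finite stage is a locally compact, zero-dimensional, Hausdorff space.
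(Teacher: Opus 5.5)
Your handling of (c)$\Rightarrow$(b)$\Rightarrow$(a) is correct. For (a)$\Rightarrow$(c), however, you give a sketch rather than a construction, and the concrete features you commit to are incompatible with $X\notin\BW(\J)$.

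Take your neighbourhoods $\{p_\alpha\}\cup(D_\alpha\setminus F)\cup\dots$ with $D_\alpha\in\J^+$ and $F$ ranging over $\J|D_\alpha$. Every neighbourhood $U$ of $p_\alpha$ then satisfies $\{n\in D_\alpha: n\notin U\}\in\J$. So $(n)_{n\in D_\alpha}$ is $\J$-convergent to $p_\alpha$, and the identity sequence does \emph{not} witness the failure of $\BW(\J)$.

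The object in your ``consistency'' paragraph is the right one-point object: the image filter $\{U\subseteq\omega: A\cap f^{-1}[\omega\setminus U]\in\I\}$, with $A$ supplied by $\J\not\leq_{BW}\I$. Indeed, for this $A$, $\J\not\leq_{BW}\I$ says precisely that no $(n)_{n\in D}$ with $D\in\J^+$ $\J$-converges to such a point. But this filter is not of the form you wrote, and it cannot be replaced by a countably generated one. By the $P^+$ property of $\J$, if a point has a decreasing countable base $(W_k)$ with $W_k\cap\omega\in\J^+$ for all $k$, then a pseudo-intersection $D\in\J^+$ of these traces converges to it in the ordinary sense. Now take $\J=\Fin$ and $\I=\I_{1/n}$, an instance of the theorem. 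Every neighbourhood of the $\I$-limit of $(f(n))_{n\in A}$, with $\I$-small fibres, contains an infinite set $f[A\setminus I]$ with $I\in\I$. So your suggested remedy of a countable base at each new point cannot work.

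Your non-compactness reasons also fail. A $\BW(\I)$ space has no infinite closed discrete subset. Moreover, the $\I$-limit of a subsequence of the identity sequence is an accumulation point of $\omega$.

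The missing idea is how to get Hausdorffness and the failure of $\BW(\J)$ at \emph{all} points, including later ones, when $\I$ is only assumed hBW. Suppose a new point is to be the $\I$-limit of $(g(n))_{n\in Z}$ and also be separated from an earlier point $q$. Then some neighbourhood $V$ of $q$ must satisfy $\{n\in Z: g(n)\in V\}\in\I$. Theorem \ref{thm: hbw charakteryzacja} only decides a countable tree of splittings. Since the earlier points cannot have countable bases, the ``binary splitting tree'' you describe does not capture their neighbourhood systems. You call the simultaneous bookkeeping ``the main obstacle'', but it is the entire content of the theorem.

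The paper overcomes this with a different design:
\begin{itemize}
\item Points are functions $h_\alpha\in 2^{2^\omega}$, topologized by pointwise convergence on test points $x_\alpha=\chi_{E_\alpha}$, so Hausdorffness is automatic.
\item At stage $\alpha$ only countably many test points exist. Hence hBW, applied to the tree $B_s$, yields $Z_\alpha$ and the values of $h_\alpha$ (P6).
\item Each $A_\alpha\in\J^+$ is killed once and for all, for every present and future point, by a test set $E_\alpha\subseteq A_\alpha$ with $E_\alpha,A_\alpha\setminus E_\alpha\in\J^+$; this uses nowhere maximality.
\item The witness set from $\J\not\leq_{BW}\I$ is stored as $Z_\alpha$ (P5). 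It is used later to choose each $E_\gamma$ invisible to it, i.e.\ $Z_\delta\cap f_\delta^{-1}[E_\gamma]\in\I$ for all $\delta\in\nabla$ with $\delta\leq\gamma$. The $P^+$ property of $\J$ handles the countably many $\delta$ at once, and this is what preserves earlier convergences (P3).
\item Non-compactness comes from a $\subseteq^\star$-decreasing chain of sets $E_{g(\alpha)}$, again built using $P^+$.
\end{itemize}
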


\begin{proof}
(c)$\implies$(b): Obvious.

\medskip

(b)$\implies$(a): Follows from Theorem \ref{thm:zawieranieBW}.

\medskip

(a)$\implies$(c): For every $n\in\omega$ define $x_n\in 2^\omega$ and $h_n:2^\omega\to 2$ by:
\[
x_n(i)=\begin{cases}
0, & \text{if }i\neq n,\cr
1, & \text{if }i=n,
\end{cases}
\]
and $h_n(x)=x(n)$ for all $x\in 2^\omega$.

Let $\{f_\alpha: \omega\leq\alpha<\omega_1\}$ be such an enumeration of the family $\{f\in\omega^\omega_1: \forall_{\beta<\omega_1}\ f^{-1}[\{\beta\}]\in\I\}$ that $f_\alpha[\omega]\subseteq\alpha$ for all $\alpha$. Enumerate also $\J^+=\{A_\alpha: \omega\leq\alpha<\omega_1\}$ and denote $\nabla=\{\alpha\in[\omega,\omega_1):f_\alpha^{-1}[[\omega,\omega_1)]]\in\I\}$.

Using transfinite induction, we will construct $\{x_\alpha\in 2^\omega: \omega\leq\alpha<\omega_1\}$, $\{E_\alpha\subseteq\omega: \omega\leq\alpha<\omega_1\}$, $\{Z_\alpha\subseteq\omega: \omega\leq\alpha<\omega_1\}$ and $\{h_\alpha\in 2^{2^\omega}: \omega\leq\alpha<\omega_1\}$ satisfying the following properties for every $\omega\leq\alpha<\omega_1$: 
\begin{itemize}
    \item[(P1)] $E_\alpha\subseteq A_\alpha$, $E_\alpha\in\J^+$, $A_\alpha\setminus E_\alpha\in \J^+$,
    \item[(P2)] $x_\alpha(i)=1\ \iff\ i\in E_\alpha$,
    \item[(P3)] $(h_{f_\beta(n)}(x_\alpha))_{n\in Z_\beta}$ is $\I$-convergent to $0=h_\beta(x_\alpha)$, for all $\omega\leq\beta\leq\alpha$,
    \item[(P4)] $x_\alpha\neq x_\beta$ for all $\beta<\alpha$,
    \item[(P5)] $Z_\alpha\in\I^+$ and:
    \begin{itemize}
        \item if $\alpha\notin\nabla$ then $f_\alpha[Z_\alpha]\subseteq[\omega,\alpha)$,
        \item if $\alpha\in\nabla$ then $f_\alpha[Z_\alpha]\subseteq\omega$ and for every $B\in\J^+$, $B\subseteq f_\alpha[Z_\alpha]$ there is $C$ such that $C\cap B\in\J^+$ and $Z_\alpha\cap f_\alpha^{-1}[C]\in\I$,
    \end{itemize}
    \item[(P6)] $(h_{f_\alpha(n)}(x_\beta))_{n\in Z_\alpha}$ is $\I$-convergent to $h_\alpha(x_\beta)$, for all $\beta<\alpha$,
    \item[(P7)] $h_\alpha\restriction 2^\omega\setminus\{x_\beta: \beta<\alpha\}=0$ and $h_\alpha\restriction \{x_n: n\in\omega\}=0$.
\end{itemize}

\medskip

Before presenting the construction, we show how to finish the proof using the above properties. 

Define $X=\{h_\alpha: \alpha<\omega_1\}$ and $\tilde{X}=\{x_\alpha:\alpha<\omega_1\}$. For each $h\in X$ let 
\[
\cB_h=\left\{\left\{g\in X: \forall_{y\in G}\ g(y)=h(y)\right\}: G\in[\tilde{X}]^{<\omega}\right\}.
\]
It is easy to check that $\{\cB_h: h\in X\}$ is a neighborhood system, so it generates a topology on $X$ (\cite[Proposition 1.2.3]{MR1039321}). Consider $X$ as a topological space with this topology. We need to show that:
\begin{itemize}
    \item[(i)] $X\notin\BW(\J)$,
    \item[(ii)] $X\in\BW(\I)$,
    \item[(iii)] $X$ is Hausdorff,
    \item[(iv)] $X$ is not compact.
\end{itemize}

(i): We will show that $X$ does not have the $\BW(\J)$ property. Consider the sequence $(h_n)_{n \in \omega}$. Suppose to the contrary that there are $A\in \J^+$ and $\beta<\omega_1$ such that $(h_n)_{n \in A} \overset{\J}{\longrightarrow}h_\beta$. Then there is $\alpha$ for which $A=A_\alpha$. The set $V=\{g \in X: g(x_\alpha)=h_\beta(x_\alpha)\}$ is an open neighborhood of $h_\beta$. We have two possible cases: 

\smallskip

\textit{Case 1:} $h_\beta(x_\alpha)=0$. Then $\{ n \in A: h_n\notin V\}=\{n \in A_\alpha: h_n(x_\alpha)\neq h_\beta(x_\alpha)\}=\{n \in A_\alpha: h_n(x_\alpha)\neq 0\}=A_\alpha\cap E_\alpha=E_\alpha$, by (P1) and (P2).

\smallskip

\textit{Case 2:} $h_\beta(x_\alpha)=1$. Then $\{ n \in A: h_n\notin V\}=\{n \in A_\alpha: h_n(x_\alpha)\neq h_\beta(x_\alpha)\}=\{n \in A_\alpha: h_n(x_\alpha)\neq 1\}=A_\alpha \setminus E_\alpha$, by (P2).

\smallskip

It follows from (P1) that both $E_\alpha$ and $A_\alpha \setminus E_\alpha$ are $\J$-positive, which contradicts the assumption that $(h_n)_{n \in A} \overset{\J}{\longrightarrow}h_\beta$.

\medskip

(ii): Now we prove that $X$ is $\BW(\I)$. Fix any sequence $(g_n)_{n \in \omega}$ in $X$. Let $f\in \omega_1^\omega$ be a function such that $g_n=h_{f(n)}$ for all $n \in \omega$. If there is $\beta<\omega_1$ for which $f^{-1}[\{\beta\}] \in \I^+$, then $(g_n)_{n\in f^{-1}[\{\beta\}]}$ is $\I$-convergent to $h_{\beta}$ and we are done. Otherwise, there is $\omega \leq \alpha < \omega_1$ such that $f=f_\alpha$. We will prove that $(g_n)_{n \in Z_\alpha} \overset{\I}{\longrightarrow}h_\alpha$ (as $Z_\alpha\in\I^+$ by (P5), this will finish the proof of $X\in\BW(\I)$). For this purpose, let us fix a basic open neighborhood $V$ of $h_\alpha$. Then $$V=\{h \in X: \forall_{i \leq m} h(x_{\beta_i})=h_\alpha(x_{\beta_i})\}$$ for some $m \in \omega$ and $\beta_0, \ldots,\beta_m < \omega_1$. Hence, we have 
\begin{equation*}
    \begin{split}
        \{n \in Z_\alpha: g_n\notin V\}
        &
         =\left\{n \in Z_\alpha: \exists_{i \leq m}g_n(x_{\beta_i})\neq h_\alpha(x_{\beta_i})\right\}
        \\&
        =\bigcup_{i\leq m}\{n \in Z_\alpha:g_n(x_{\beta_i})\neq h_\alpha(x_{\beta_i}) \}\textrm{.}
 \end{split}
 \end{equation*} 
Thus, it is enough to show that for all $\beta<\omega_1$ we have $$\{n \in Z_\alpha: g_n(x_{\beta})\neq h_\alpha(x_{\beta})\}\in \I \textrm{.}$$ Fix $\beta<\omega_1$ and observe that $(h_{f_\alpha(n)}(x_{\beta}))_{n \in Z_\alpha} \overset{\I}{\longrightarrow}h_\alpha(x_{\beta})$. Indeed, if $\beta<\alpha$, then it follows from (P6) and if $\beta \geq \alpha$, then this is (P3). Therefore, since $U=\{h\in X: h(x_\beta)=h_\alpha(x_{\beta})\}$ is an open neighborhood of $h_\alpha$, we get
\begin{equation*}
    \begin{split}
        \{n\in Z_\alpha:g_n\notin U\}
        &
         =\{n \in Z_\alpha: g_n(x_{\beta})\neq h_\alpha(x_{\beta})\}
        \\&
        =\{n \in Z_\alpha: h_{f_\alpha(n)}(x_{\beta})\neq h_\alpha(x_{\beta})\}\in \I\textrm{.}
 \end{split}
 \end{equation*} 

\medskip

(iii): We show that $X$ is Hausdorff. Fix $h_\alpha,h_\beta\in X$ such that $h_\alpha\neq h_\beta$. We will consider 3 cases.

\smallskip

\textit{Case 1:} $\alpha,\beta \geq \omega$. Since $h_\alpha\neq h_\beta$ and $h_\alpha\restriction 2^\omega\setminus \tilde{X}=h_\beta\restriction 2^\omega\setminus \tilde{X}=0$ (by (P7)), there is $\gamma<\omega_1$ such that $h_\alpha(x_\gamma)\neq h_\beta(x_\gamma)$. Then $V_\alpha=\{h \in X : h_\alpha(x_\gamma)=h(x_\gamma)\}$ and $V_\beta=\{h \in X : h_\beta(x_\gamma)=h(x_\gamma)\}$ are neighborhoods of $h_\alpha$ and $h_\beta$, respectively, such that $V_\alpha\cap V_\beta=\emptyset$. 

\smallskip

\textit{Case 2:} $\alpha,\beta<\omega$. Then $h_\alpha(x_\alpha)=1$ and $h_\beta(x_\alpha)=0$, so $V_\alpha=\{h \in X : h_\alpha(x_\alpha)=h(x_\alpha)\}$ and $V_\beta=\{h \in X : h_\beta(x_\alpha)=h(x_\alpha)\}$ are the required disjoint neighborhoods of $h_\alpha$ and $h_\beta$, respectively.

\smallskip

\textit{Case 3:} $\beta<\omega\leq\alpha$. In this case $h_\beta(x_\beta)=1$ and $h_\alpha(x_\beta)=0$ (by (P7)). Similarly to the previous cases, $V_\alpha=\{h \in X : h_\alpha(x_\beta)=h(x_\beta)\}$ and $V_\beta=\{h \in X : h_\beta(x_\beta)=h(x_\beta)\}$ are the neighborhoods we need.

\medskip

(iv): Before showing that $X$ is not compact, we need to construct (by induction) an injective map $g:\omega_1\to\omega_1\setminus\omega$ such that $E_{g(\beta)}\supseteq^\star E_{g(\alpha)}$ whenever $\beta<\alpha<\omega_1$. Start with $g(0)=\omega$. If $n\in\omega$ and $g(n)$ is already defined, find $\omega\leq g(n+1)<\omega_1$ such that $E_{g(n)}=A_{g(n+1)}$ (this is possible as $E_{g(n)}\in\J^+$ by (P1)) and note that $E_{g(n+1)}\subseteq A_{g(n+1)}=E_{g(n)}$ (by (P1)) and $g(n+1)\neq g(i)$ for $i\leq n$ (as $g(n+1)=g(i)$ would imply $A_{g(i)}=A_{g(n+1)}=E_{g(n)}\subseteq^\star E_{g(i)}$, which contradicts $A_{g(i)}\setminus E_{g(i)}\in\J^+$). If $\omega\leq\alpha<\omega_1$ and all $g(\beta)$ for $\beta<\alpha$ are constructed, enumerate $\alpha=\{\beta_i: i\in\omega\}$ and define $B_n=\bigcap_{i\leq n}E_{g(\beta_i)}$ for all $n\in\omega$. Then $(B_n)_{n\in\omega}$ is a $\subseteq$-decreasing sequence of elements of $\J^+$ (as given $n\in\omega$ we have $E_{g(\max\{\beta_i:\ i\leq n\})}\subseteq^\star E_{g(\beta_i)}$ for all $i\leq n$, which gives us $\J^+\ni E_{g(\max\{\beta_i:\ i\leq n\})}\subseteq^\star B_n$ by (P1)). Using the fact that $\J$ is $P^+$, find $g(\alpha)\in[\omega,\omega_1)$ such that $A_{g(\alpha)}\setminus B_n$ is finite for all $n\in\omega$. Then for each $i\in\omega$ we have:
\[
E_{g(\alpha)}\setminus E_{g(\beta_i)}\subseteq A_{g(\alpha)}\setminus B_i\in\Fin,
\]
(by (P1)) and $g(\alpha)\neq g(\beta_i)$ for all $i\in\omega$ (as $g(\alpha)=g(\beta_i)$ would imply $A_{g(\beta_i)}\setminus E_{g(\beta_i)}=A_{g(\alpha)}\setminus E_{g(\beta_i)}\subseteq A_{g(\alpha)}\setminus B_i\in\Fin$ contradicting (P1)). Thus, $g(\alpha)$ is as needed.

We are ready to define an open cover of $X$ without a finite subcover. Let $U_\alpha=\{h\in X: h(x_{g(\alpha)})=0\}$ for all $\alpha<\omega_1$ and $V_n=\{h\in X:\ h(x_n)=1\}$ for all $n\in\omega$. Then $X=\bigcup_{\alpha<\omega_1} U_\alpha\cup\bigcup_{n\in\omega}V_n$. Indeed, it is clear that $h_n\in V_n$ for all $n\in\omega$ and if $h=h_\beta$ for some $\omega\leq\beta<\omega_1$ then there is $\alpha<\omega_1$ with $g(\alpha)>\beta$ (by injectivity of $g$), so $h\in U_\alpha$ by (P7). 

Fix now $\alpha_0<\ldots<\alpha_k<\omega_1$ and $n_0<\ldots<n_m<\omega$. We want to find some $h\in X\setminus(\bigcup_{i\leq k}U_{\alpha_i}\cup\bigcup_{j\leq m}V_{n_j})$. Since $\J^+\ni E_{g(\alpha_k)}\subseteq^\star\bigcap_{i\leq k}E_{g(\alpha_i)}$, we have $\bigcap_{i\leq k}E_{g(\alpha_i)}\in\J^+$, so there is some $p\in \bigcap_{i\leq k}E_{g(\alpha_i)}\setminus\{n_j:\ j\leq m\}$. Then $h_p\in X$, $h_p\notin\bigcup_{j\leq m}V_{n_j}$ and $h_p\notin U_{\alpha_i}$ for all $i\leq k$ (as $h_p(x_{g(\alpha_i)})=1$ by (P2) and $p\in E_{g(\alpha_i)}$). 

\medskip

Now we return to the construction of $x_\alpha\in 2^\omega$, $E_\alpha\subseteq\omega$, $Z_\alpha$ and $h_\alpha\in 2^{2^\omega}$ for $\omega\leq\alpha<\omega_1$.

Suppose that $\omega\leq\alpha<\omega_1$ and $x_\beta,E_\beta,Z_\beta$ as well as $h_\beta$ are defined for all $\omega\leq\beta < \alpha$ in such a way that (P1)-(P7) hold for them. We will divide the construction into two parts. At first we will construct $Z_\alpha$ and $h_\alpha$. Next, we will move on to $x_\alpha$ and $E_\alpha$.

\medskip

\textbf{First part of the construction}

\medskip

Before constructing $Z_\alpha$, we need to define auxiliary sets $\widehat{Z}_\alpha\in\I^+$. We will do this separately for $\alpha \in \nabla$ and for $\alpha \notin \nabla$.

\smallskip

\textit{Case 1:} If $\alpha \notin \nabla$, then put $\widehat{Z}_\alpha=f_\alpha^{-1}[[\omega,\omega_1)] \in \I^+$. Note that actually in this case we have  $\widehat{Z}_\alpha=f_\alpha^{-1}[[\omega,\alpha)]$ as $f_\alpha[\omega]\subseteq\alpha$.

\smallskip

\textit{Case 2:} If $\alpha \in \nabla$, then let $f \in \omega^\omega$ be defined by $$f(n)=\begin{cases}
    f_\alpha(n),  & \text{if } f_\alpha(n)<\omega, \\ 0, & \text{if } f_\alpha(n) \geq\omega \textrm{,}
\end{cases}$$
for all $n \in \omega$. From the fact that $\J \nleq_\BW\I$ it follows that there exists $\widetilde{Z}_\alpha\in \I^+$ such that for any given $B \subseteq f[\widetilde{Z}_\alpha]$ with $B \in \J^+$ there is $C$ for which $C\cap B\in\J^+$ and $\widetilde{Z}_\alpha \cap f^{-1}[C]\in\I$. We know that $f^{-1}[\{0\}]=f_\alpha^{-1}[\{0\}] \cup f_\alpha^{-1}[[\omega,\omega_1)]\in \I$. Therefore $\widehat{Z}_\alpha=\widetilde{Z}_\alpha\setminus f_\alpha^{-1}[[\omega,\omega_1)] \in \I^+$. Moreover, it holds that $f_\alpha[\widehat{Z}_\alpha]\subseteq\omega$ and for any given $B \subseteq f_\alpha[\widehat{Z}_\alpha]$ with $B \in \J^+$ there is $C$ for which $C\cap B\in\J^+$ and $\widehat{Z}_\alpha \cap f_\alpha^{-1}[C]\in\I$ (as $f\upharpoonright\widehat{Z}_\alpha=f_\alpha\upharpoonright\widehat{Z}_\alpha$).

\medskip

Now, we will define $Z_\alpha$. Fix any enumeration $\alpha=\{\beta_i:i \in \omega\}$. Let $B_\emptyset=\widehat{Z}_\alpha$ and for each $(s_0,\ldots,s_k)=s\in 2^{<\omega}$ define: 
$$B_s=\left\{n \in \widehat{Z}_\alpha: \forall_{i\leq k}\ h_{f_\alpha(n)}(x_{\beta_i})=s_i\right\} \textrm{.}$$ 
Observe that $\{B_s:s \in 2^{<\omega}\}$ is a family of subsets of $\widehat{Z}_\alpha$ such that:
\begin{enumerate}
\item $B_{\emptyset}=\widehat{Z}_\alpha$;
\item $B_s=B_{s^{\frown} (0)} \cup B_{s^{\frown} (1)}$ for all $s\in 2^{<\omega}$;
\item $B_{s^{\frown}  (0)} \cap B_{s^{\frown} (1)}=\emptyset$ for all $s\in 2^{<\omega}$.
\end{enumerate}
It follows from Theorem \ref{thm: hbw charakteryzacja} that there are $y \in 2^\omega$ and $Z_\alpha \in \I^+$ such that $Z_\alpha \setminus B_{y \upharpoonright n} \in \I$ for all $n\in\omega$. Without loss of generality we can assume that $Z_\alpha \subseteq \widehat{Z}_\alpha$. Hence, (P5) holds for such $Z_\alpha$. Now, let $h_\alpha(x_{\beta_i})=y_i$ for all $i \in \omega$ and $h_\alpha(x)=0$ for any $x\in 2^\omega\setminus\{x_\beta:\beta<\alpha\}$. It remains to show that (P6) and (P7) hold for such $Z_\alpha$ and $h_\alpha$.

We start with (P6). Fix $\beta<\alpha$. There is $i \in \omega$ such that $\beta=\beta_i$. Then 
\begin{equation*}
    \begin{split}
        \{ n \in Z_\alpha:h_{f_\alpha(n)}(x_{\beta_i})\neq h_\alpha(x_{\beta_i}) \}
        &
        =\{ n \in Z_\alpha: h_{f_\alpha(n)}(x_{\beta_i})\neq y_i\} 
        \\&
        \subseteq \{n \in Z_\alpha: \exists_{j \leq i}\ h_{f_\alpha(n)}(x_{\beta_j})\neq y_j   \}
        \\&
        =Z_\alpha \setminus B_{y\upharpoonright i+1} \in \I.
 \end{split}
 \end{equation*} 
Therefore (P6) holds for $Z_\alpha$ and $h_\alpha$.

\medskip

As $h_\alpha\upharpoonright2^\omega\setminus\{x_\beta:\beta<\alpha\}=0$, in order to get (P7) it remains to show that $h_\alpha\upharpoonright\{x_n:n\in \omega\}=0$. Fix $k \in \omega$. We need to consider two cases.

\smallskip

\textit{Case 1:} $\alpha \notin \nabla$. Then $f_\alpha[Z_\alpha]\subseteq[\omega,\omega_1)$ by (P5). Hence, for any $n \in Z_\alpha$ there is $\beta$ such that $f_\alpha(n)=\beta\in [\omega,\alpha)$ and therefore $h_{f_\alpha(n)}(x_k)=h_\beta(x_k)=0$ (by (P7) applied to $\beta$). Since by (P6) we know that $(h_{f_\alpha(n)}(x_k))_{n \in Z_\alpha} \overset{\I}{\longrightarrow} h_\alpha(x_k)$, we get $h_{\alpha}(x_k)=0$.

\smallskip

\textit{Case 2:} $\alpha \in \nabla$. Then $f_\alpha(n)<\omega$ for all $n \in Z_\alpha$ (by (P5)). Therefore, $h_{f_\alpha(n)}(x_k)=1$ only if $f_{\alpha}(n)=k$. Hence, $\{n \in Z_\alpha:h_{f_\alpha(n)}(x_k)=1\}\subseteq f_\alpha^{-1}[\{k\}]\in \I$. From $(P6)$ it follows that $h_\alpha(x_k)=0$.

\medskip

\textbf{Second part of the construction}

\medskip

Using the fact that $\J$ is nowhere maximal, find $E_\alpha^0\subseteq A_\alpha$ such that $E_\alpha^0\in \J^+$ and $A_\alpha\setminus E_\alpha^0\in \J^+$. Let $\nabla\cap(\alpha+1)=\{\delta_i:i \in \omega\}$. Before we will define $E_\alpha$, we need to construct (by induction) a sequence $(E_\alpha^i)_{i>0}$ such that the following conditions hold for all $i>0$:
\begin{itemize}
    \item[(a)] $E_\alpha^i \in \J^+$;
    \item[(b)] $E_\alpha^{i+1}\subseteq E_\alpha^i\subseteq E_\alpha^0 \subseteq A_\alpha$;
    \item[(c)] $Z_{\delta_i}\cap f^{-1}_{\delta_i}[E_\alpha^{i+1}]\in \I$.
\end{itemize}
Suppose that $i>0$ and $E_\alpha^i$ is already defined. We need to consider two cases.

\smallskip

\textit{Case 1:} If $E_\alpha^i\cap f_{\delta_i}[Z_{\delta_i}]\in\J$, then put $E_\alpha^{i+1}=E_\alpha^i \setminus f_{\delta_i}[Z_{\delta_i}]=E_\alpha^i \setminus(E_\alpha^i\cap f_{\delta_i}[Z_{\delta_i}])\in\J^+$ and observe that $Z_{\delta_i}\cap f^{-1}_{\delta_i}[E_\alpha^{i+1}]=\emptyset\in \I$.

\smallskip

\textit{Case 2:} If $E_\alpha^i\cap f_{\delta_i}[Z_{\delta_i}]\in\J^+$, then $\delta_i\in\nabla$ (otherwise $E_\alpha^i\cap f_{\delta_i}[Z_{\delta_i}]\subseteq\omega\cap [\omega,\delta_i)=\emptyset$ by (P5)). Applying condition (P5) to $\delta_i\in\nabla$ and the set $\J^+\ni B=E_\alpha^i\cap f_{\delta_i}[Z_{\delta_i}]\subseteq f_{\delta_i}[Z_{\delta_i}]$, we get $C$ such that $E_\alpha^{i+1}=C\cap B=C\cap (E_\alpha^i\cap f_{\delta_i}[Z_{\delta_i}])\in\J^+$ and $Z_{\delta_i}\cap f_{\delta_i}^{-1}[E_\alpha^{i+1}]\subseteq Z_{\delta_i}\cap f_{\delta_i}^{-1}[C]\in\I$.

\smallskip

This finishes the construction of the sequence $(E_\alpha^i)_{i\in\omega}$. Now, applying the fact that $\J$ is $P^+$ to this sequence, we obtain a set $\tilde{E}_\alpha\in\J^+$ such that $\tilde{E}_\alpha\setminus E_\alpha^i$ is finite for all $i\in\omega$. Without loss of generality we may assume that $\tilde{E}_\alpha\subseteq E_\alpha^0$. 

Observe that there are $2^\omega$ many $\J$-positive subsets of $\tilde{E}_\alpha$ (more precisely, using the fact that $\J$ is nowhere maximal, find $E\subseteq \tilde{E}_\alpha$ such that both $E$ and $\tilde{E}_\alpha\setminus E$ are $\J$-positive and observe that $E\cup E'\in(\J|\tilde{E}_\alpha)^+$ for every $E'\in\cP(\tilde{E}_\alpha\setminus E)$). Therefore, we can find some $E_\alpha\in (\J|\tilde{E}_\alpha)^+\setminus\{E_\beta:\beta<\alpha\}$. Then $E_\alpha\subseteq \tilde{E}_\alpha\subseteq E^0_\alpha\subseteq A_\alpha$, $E_\alpha\in\J^+$ and $A_\alpha\setminus E_\alpha\supseteq A_\alpha\setminus E^0_\alpha\in\J^+$, so condition (P1) is satisfied. Let $x_\alpha$ be as in (P2). It remains to check that (P3) and (P4) hold. 

\smallskip

At first we show (P3). Let $\omega \leq \beta \leq \alpha$. We know that $h_\beta(x_\alpha)=0 $ (by (P7) applied to $\beta$). We need to consider two cases.

\smallskip

\textit{Case 1:} $\beta \in \nabla$. Let $i\in\omega$ be such that $\beta=\delta_i$ and denote $F=\tilde{E}_\alpha\setminus E_\alpha^{i+1}$ (which is a finite set, by the definition of $\tilde{E}_\alpha$). Recall that $f_{\delta_i}[Z_{\delta_i}]\subseteq\omega$ (by (P5)). Thus, using (P2) and (c), we have:
\begin{equation*}
    \begin{split}
        \{n\in Z_{\delta_i}: h_{f_{\delta_i}(n)}(x_\alpha)=1\}
        &
        =\left\{ n \in Z_{\delta_i}: f_{\delta_i}(n)\in E_\alpha\right\} =Z_{\delta_i} \cap f^{-1}_{\delta_i}[E_\alpha]
        \\&
        \subseteq Z_{\delta_i} \cap f^{-1}_{\delta_i}[\tilde{E}_\alpha]\subseteq Z_{\delta_i} \cap \left(f^{-1}_{\delta_i}[E^{i+1}_\alpha]\cup f^{-1}_{\delta_i}[F]\right)
        \\&
        \subseteq \left(Z_{\delta_i}\cap f^{-1}_{\delta_i}[E^{i+1}_\alpha]\right)\cup f^{-1}_{\delta_i}[F]\in\I.
 \end{split}
 \end{equation*} 

\smallskip

\textit{Case 2:} $\beta \notin \nabla$. Then by (P5) we know that $f_\beta[Z_\beta]\subseteq[\omega,\beta)\subseteq[\omega,\alpha)$, so $h_{f_\beta(n)}(x_\alpha)=0$ for every $n\in Z_\beta$ (by (P7) applied to $f_\beta(n)$). It follows that $(h_{f_\beta(n)}(x_\alpha))_{n\in Z_\beta}$ is $\I$-convergent to $0$.

\smallskip

Now, it remains to show that (P4) holds as well. Fix $\beta<\alpha$. Since it is obvious that (P4) holds if $\beta<\omega$, it suffices to consider the case $\beta\geq\omega$. Then $E_\alpha\neq E_\beta$, therefore we can either find $i$ such that $i \in E_\alpha\setminus E_\beta$ or find $i \in E_\beta \setminus E_\alpha$. In the former case, $x_\alpha(i)=1\neq 0 =x_\beta(i)$. In the latter case, $x_\alpha(i)=0\neq 1=x_\beta(i)$.
\end{proof}

\begin{remark}
Since $\Fin^2$ is $\widehat{P}^+$ (by \cite[Proposition 2.6(2)]{zbMATH07566290}), Example \ref{ex:Fin^2} shows that in Theorem \ref{thm:space} we cannot replace $P^+$ with $\widehat{P}^+$.
\end{remark}

\begin{corollary}
\label{cor:spaces_for_Fsigma}
Assume CH. Let $\I$ and $\J$ be $\mathbf{F_\sigma}$ ideals. Then the following are equivalent:
\begin{itemize}
    \item[(a)] $\J\not\leq_{BW}\I$,
    \item[(b)] there is a $\BW(\I)$ space which is not $\BW(\J)$,
    \item[(c)] there is a Hausdorff non-compact $\BW(\I)$ space which is not $\BW(\J)$.
\end{itemize}
\end{corollary}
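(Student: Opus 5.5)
The corollary will follow by applying Theorem \ref{thm:space} with the given $\mathbf{F_\sigma}$ ideals. So the plan is simply to check that its hypotheses hold: $\J$ must be a nowhere maximal $P^+$ ideal, and $\I$ must have the $\hBW$ property.

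\emph{$\J$ is $P^+$.} By \cite[Lemma 1.2]{MR748847}, every $\mathbf{F_\sigma}$ ideal is $P^+$, as recalled in Section \ref{sec:preliminaries}.

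\emph{$\J$ is nowhere maximal.} Every Borel ideal is nowhere maximal, which is also noted in Section \ref{sec:preliminaries}. The reason is that for $A\in\J^+$ the restriction $\J|A$ is again Borel (indeed $\mathbf{F_\sigma}$), since it is the preimage of $\J$ under the continuous map $x\mapsto x\cap A$, intersected with the closed set $\cP(A)$. A maximal ideal has neither the Baire property nor measurability \cite{Sierpinski}, so a Borel ideal cannot be maximal. Hence $\J|A$ is not maximal.

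\emph{$\I$ has $\hBW$.} All $\mathbf{F_\sigma}$ ideals are $P^+$, hence $\widehat{P}^+$. By \cite[Proposition 3.5]{MR2320288} this gives the $\hBW$ property. Alternatively, $P^+$ gives $\hFinBW$, which implies $\hBW$.

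With these hypotheses in place, Theorem \ref{thm:space} under CH yields (a)$\iff$(b)$\iff$(c) verbatim. There is no real obstacle here. The only slightly delicate point is nowhere maximality, which needs the observation that restrictions of Borel ideals remain Borel. One small technicality is that $\J|A$ lives on $A$ rather than on $\omega$, but Borelness transfers via any bijection $A\to\omega$.
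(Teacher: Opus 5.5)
Your proposal is correct and follows the paper's proof: the corollary is obtained from Theorem~\ref{thm:space} by noting that every $\mathbf{F_\sigma}$ ideal is $P^+$, nowhere maximal (restrictions of Borel ideals stay Borel, and maximal ideals lack the Baire property), and has the $\hBW$ property. Your extra justification of nowhere maximality simply spells out in more detail the facts the paper cites from Section~\ref{sec:preliminaries}.
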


\begin{proof}
It follows from Theorem \ref{thm:space}, since every $\mathbf{F_\sigma}$ ideal is nowhere maximal, $P^+$ and has the $\hBW$ property (cf. Section \ref{sec:preliminaries}).
\end{proof}

\begin{corollary}\label{cor: spaces}
Assume CH. Then there exists:
\begin{itemize}
 \item[(a)] a Hausdorff sequentially compact (but not compact) space without the $\BW(\mathcal{W})$ property,
 \item[(b)] a Hausdorff sequentially compact (but not compact) space without the $\BW(\I_\frac{1}{n})$ property,
 \item[(c)] a Hausdorff $\BW(\I_\frac{1}{n})$ space which is not compact and not sequentially compact,
 \item[(d)] a Hausdorff non-compact $\BW(\mathcal{W})$ space without the $\BW(\I_\frac{1}{n})$ property,
 \item[(e)] a Hausdorff non-compact $\BW(\I_\frac{1}{n})$ space without the $\BW(\mathcal{W})$ property.
\end{itemize}    
\end{corollary}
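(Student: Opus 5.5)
Each item of Corollary~\ref{cor: spaces} follows from Corollary~\ref{cor:spaces_for_Fsigma}. All ideals involved ($\Fin$, $\cW$, $\I_{1/n}$) are $\mathbf{F_\sigma}$, so under CH it suffices in each case to verify the relevant non-reduction $\J\not\leq_{BW}\I$. Corollary~\ref{cor:spaces_for_Fsigma}(c) then gives a Hausdorff non-compact $\BW(\I)$ space which is not $\BW(\J)$. Recall that $\BW(\Fin)$ is exactly the class of sequentially compact spaces, so ``sequentially compact'' means $\BW(\Fin)$ and ``not sequentially compact'' means not $\BW(\Fin)$.

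For (a), take $\I=\Fin$ and $\J=\cW$. We need $\cW\not\leq_{BW}\Fin$, and this follows from Proposition~\ref{prop:Fin_vs_tall}(a), since $\cW$ is tall. Alternatively, use Proposition~\ref{prop:basic-properties}(d) together with $\cW\not\leq_K\Fin$. Item (b) is identical with $\J=\I_{1/n}$, which is also tall.

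For (c), take $\I=\I_{1/n}$ and $\J=\Fin$. Proposition~\ref{prop:idealy_nie_powyzej_Fin}(a) gives $\Fin\not\leq_{BW}\I_{1/n}$, so there is a Hausdorff non-compact $\BW(\I_{1/n})$ space which is not $\BW(\Fin)$, that is, not sequentially compact.

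For (d) and (e), Proposition~\ref{prop:WvsSummable} gives both $\I_{1/n}\not\leq_{BW}\cW$ and $\cW\not\leq_{BW}\I_{1/n}$. The first yields a Hausdorff non-compact $\BW(\cW)$ space without $\BW(\I_{1/n})$. The second yields a Hausdorff non-compact $\BW(\I_{1/n})$ space without $\BW(\cW)$.

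There is no real obstacle here. The only care needed is to orient each pair $(\I,\J)$ correctly when applying Corollary~\ref{cor:spaces_for_Fsigma}, and to note that $\Fin$, $\cW$ and $\I_{1/n}$ are all $\mathbf{F_\sigma}$, as recorded in Section~\ref{sec:preliminaries}.
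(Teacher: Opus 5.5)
Your proposal is correct and follows the paper's proof essentially verbatim: in each item you apply Corollary~\ref{cor:spaces_for_Fsigma} with the pair $(\I,\J)$ oriented correctly. The needed non-reductions come from Proposition~\ref{prop:Fin_vs_tall}(a) (tallness of $\cW$ and $\I_{1/n}$) for (a) and (b), from Proposition~\ref{prop:idealy_nie_powyzej_Fin}(a) for (c), and from Proposition~\ref{prop:WvsSummable} for (d) and (e).
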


\begin{proof}
The first two items follow from Corollary \ref{cor:spaces_for_Fsigma}, since Proposition \ref{prop:Fin_vs_tall}(a) implies that $\mathcal{W}\not\leq_\BW\Fin$ and $\I_{1/n}\not\leq_\BW\Fin$ (as $\mathcal{W}$ and $\I_{1/n}$ are tall). The last three items follow from Corollary \ref{cor:spaces_for_Fsigma} together with Propositions \ref{prop:idealy_nie_powyzej_Fin}(a) and \ref{prop:WvsSummable}. 
\end{proof}

\begin{remark}
Filip\'{o}w and Tryba in \cite[Question 2]{FT} asked whether there is a sequentially compact space that is not $\BW(\mathcal{W})$. Corollary \ref{cor: spaces}(a) gives a positive answer under CH.
\end{remark}

\section{Concluding remarks}
\label{sec:last}

In this section we discuss possible directions for further research.

By \cite[Section 2.7]{alcantara-phd-thesis}, there exists an ideal $\conv$ such that the following holds for every ideal $\I$:
$$[0,1]\in\FinBW(\I)\ \iff\ \conv\not\leq_K\I.$$
It would be interesting to find a critical (in the sense of $\leq_{BW}$) ideal for the property $[0,1]\in\BW(\I)$, that is, find some ideal $\J_{[0,1]}$ such that for every ideal $\I$ the following equivalence holds:
$$[0,1]\in\BW(\I)\ \iff\ \J_{[0,1]}\not\leq_{BW}\I.$$
We do not know whether such ideal exists. Perhaps the above equivalence requires some additional assumptions.

As we have seen, the space constructed in Theorem \ref{thm:space} is non-compact. Sometimes we can apply earlier results to find a compact Hausdorff $\BW(\I)$ space, which is not $\BW(\J)$: this is the case if it is known that $[0,1]\in\BW(\I)$ and $[0,1]\notin\BW(\J)$ (see \cite{MR2320288} for such examples). However, in general, the questions about existence of compact Hausdorff $\BW(\I)$ spaces, which are not $\BW(\J)$, seem to be more demanding.

Corollary \ref{cor: spaces} completely characterizes (under CH) the existence of $\BW(\I)$ spaces, which are not $\BW(\J)$, in the realm of $\mathbf{F_\sigma}$ ideals. For several other pairs of ideals, we are still able to answer questions about existence of such spaces:
\begin{itemize}
    \item By Corollary \ref{cor:idealy_powyzej_Fin}(c), every $\BW(\I_d)$ space is sequentially compact.
    \item By \cite[Example 3]{MR1181163}, there is a sequentially compact space (namely: $[0,1]$), which is not $\BW(\I_d)$.
    \item By \cite[Example 3.1]{MR2320288}, there is a sequentially compact space (namely: $[0,1]$), which is not $\BW(\nwd)$.
\end{itemize}
However, it is not hard to find pairs of well-known ideals, for which the question about existence of $\BW(\I)$ spaces, which are not $\BW(\J)$, remains open. For instance, we do not know the following:
\begin{itemize}
    \item Is there a $\BW(\nwd)$ space without the $\BW(\I_d)$ property?
    \item Is there a $\BW(\I_d)$ space without the $\BW(\nwd)$ property?
    \item Is there a $\BW(\nwd)$ space which is not sequentially compact?
\end{itemize}
Note that $\nwd\not\leq_{BW}\I_d$, $\I_d\not\leq_{BW}\nwd$ (both by Proposition \ref{prop:basic-properties}(d)) and $\Fin\not\leq_{BW}\nwd$ (by Proposition \ref{prop:idealy_nie_powyzej_Fin}(d)). 

Finally, the big open problem concerns the necessity of using CH in Theorem \ref{thm:space}: are there ideals $\I$ and $\J$, for which the existence of a $\BW(\I)$ space, which is not $\BW(\J)$, is independent of ZFC?

\bibliographystyle{amsplain}
\bibliography{references}

\end{document}